\newtheorem{Theorem}{Theorem}[section]
\newtheorem{Lemma}[Theorem]{Lemma}
\newtheorem{dfnandprop}[Theorem]{Definition and Proposition}
\newtheorem{Proposition}[Theorem]{Proposition}
\theoremstyle{remark}
\newtheorem*{proof of claim}{Proof of claim}
\theoremstyle{definition}
\newtheorem{Definition}[Theorem]{Definition}
\newtheorem*{Remark}{Remark}
\newtheorem*{exm}{Example}
\newtheorem*{dsa}{Dynamical standing assumption}
\newtheorem*{notations}{Notations}
\newtheorem*{asa}{Holomorphic standing assumption}
\def\F{{\mathcal{F}}}
\def\J{{\mathcal{J}}}
\def\A{{\mathcal{A}}}
\def\B{{\mathscr{B}}}
\def\C{{\mathbb{C}}}
\def\D{{\mathbb{D}}}
\def\N{{\mathbb{N}}}
\def\H{{\mathbb{H}}}
\def\Fl{{\mathscr{F}}}
\def\Fln{{\mathscr{F}}_{n}}
\def\Fli{{\mathscr{F}}_{\infty}}
\def\H{{\mathcal{H}}}
\newcommand{\e}{\operatorname{e}}
\newcommand{\id}{\operatorname{id}}
\newcommand{\Ima}{\operatorname{Im}}
\newcommand{\cl}{\overline}
\newcommand{\wt}{\widetilde}
\newcommand{\luc}{\operatorname{luc}}
\newcommand{\dyn}{\operatorname{dyn}}
\newcommand{\Hol}{\operatorname{Hol}}
\providecommand{\Attr}{\mathop{\rm Attr}\nolimits}
\title[Kernels of nonescaping-hyperbolic components]{Dynamical approximation and kernels of nonescaping-hyperbolic components}
\author{Helena Mihaljevi\'{c}-Brandt}
\begin{document}

\maketitle

\begin{abstract}
Let $\Fln$ be families of entire functions, 
holomorphically parametrized by a complex manifold $M$.
We consider those parameters in $M$ that correspond to 
\emph{nonescaping-hyperbolic} functions, i.e., those maps $f\in\Fln$
for which the postsingular set $P(f)$ is a compact subset of 
the Fatou set $\F(f)$ of $f$. 
We prove that if $\Fln\to\Fli$ in the sense of a certain dynamically 
sensible metric, 
then every nonescaping-hyperbolic component in  
the parameter space of $\Fli$ is a \emph{kernel} 
of a sequence of nonescaping-hyperbolic components in the 
parameter spaces of $\Fln$.
Parameters belonging to such a kernel do not 
always correspond to hyperbolic functions in $\Fli$. 
Nevertheless, we show that these functions must be $J$-stable. 
Using quasiconformal equivalences, we are able to construct many 
examples of families to which our results can be applied.  
\end{abstract}

\section{Introduction}

Let $\Hol^{*}(\C)$ be the space of all entire functions that are
not constant or linear. A map $f\in \Hol^{*}(\C)$ is said to
be \emph{nonescaping-hyperbolic} if its postsingular set $P(f)$ is a compact subset of 
the Fatou set $\F(f)$. 
(Basic definitions and notations are revised in 
Section \ref{section_pre_cons}.)
We want to understand how nonescaping-hyperbolic
maps behave under certain small perturbations.
It is known that within the space of all 
polynomials or the space of all 
transcendental entire maps with finitely many singular values, 
(nonescaping-)hyperbolic functions exhibit particularly simple and 
stable dynamics \cite{mane_sad_sullivan,eremenko_lyubich_2}.

The space $\Hol^{*}(\C)$ is naturally 
equipped with the topology of locally 
uniform convergence. However, this topology is not convenient
for our dynamical considerations since maps that are nearby in
the corresponding metric often have completely different dynamics 
(see e.g. Example \ref{ex1}). 
We introduce a new metric $\chi_{\dyn}$ on $\Hol^{*}(\C)$
which is dynamically more sensible: 
two entire maps are close with respect to $\chi_{\dyn}$  
if their locally uniform distance  
\emph{and} the Hausdorff distance between their sets of 
singular values is small. 

There are certainly many examples of entire maps that converge
in the metric $\chi_{\dyn}$ to a map in $\Hol^{*}(\C)$; 
the probably most prominent is the approximation of 
the exponential map
$E(z)=\e^z$ by the polynomials
$P_n(z)=\left(1 + z/n\right)^n$. 
In the article \cite{devaney_goldberg_hubbard}, Devaney, Goldberg and Hubbard  
investigated the relationship between the
families $E_{\lambda}(z)=\lambda\e^{z}$ 
and $P_{n,\lambda}(z)=\lambda\left(1 + z/n\right)^n$. 
The result was a nice connection between the respective parameter spaces: 
the authors proved pointwise convergence of 
nonescaping-hyperbolic components 
(i.e. connected components of the set of those $\lambda\in\C$ for which 
$E_{\lambda}$ or $P_{d,\lambda}$, respectively, is nonescaping-hyperbolic)
as well as convergence of certain external rays to curves called ``hairs'' 
in exponential parameter space (see also \cite{devaney_etal}). 
One can say that this point of view has provided an important 
conceptional basis for much subsequent work on the exponential family.

In this article, we embed the underlying approximation idea
into a general setup.
More precisely, let $M$ be a complex manifold and let $\chi_{M^{'}}$ be 
a metric on $M^{'}:=(\N\cup\{\infty\})\times M$. Let $\Hol^{*}_b(\C)$ denote the 
set of all maps in $\Hol^{*}(\C)$ with bounded sets of singular values. 
Our main result is the following.

\begin{Theorem}
\label{thm_main}
For every $n\in\N\cup\{\infty\}$ let 
$\Fln=\{ f_{n,\lambda}\}\subset\Hol^{*}_b(\C)$ 
be a family of entire functions 
that depend holomorphically on $\lambda\in M$. 
Assume that for every $n$, the singular values of all maps
in $\Fln$ are holomorphically parametrized by $M$, 
and that $F: M^{'}\to\Hol^{*}_b(\C), (n,\lambda)\mapsto f_{n,\lambda}$
is continuous with respect to the metrics 
$\chi_{M^{'}}$ and $\chi_{\dyn}$.
 
If $\wt{H}$ is a kernel 
of a sequence of hyperbolic components of $\Fln$, then exactly one of the following 
statements holds: 
\begin{itemize}
\item[$(i)$] The map $f_{\lambda}\in\Fli$ is not hyperbolic for any $\lambda\in\wt{H}$.
\item[$(ii)$] There is a hyperbolic component $H_{\infty}$ of $\Fli$ such that 
$\wt{H}=H_{\infty}$. 
\end{itemize}
\end{Theorem}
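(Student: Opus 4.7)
The plan is to prove the dichotomy by showing that if case $(i)$ fails then case $(ii)$ holds. Suppose $\lambda_0 \in \wt{H}$ is such that $f_{\infty,\lambda_0}$ is nonescaping-hyperbolic. A standard implicit function argument applied to the fixed-point equation $f_{\infty,\lambda}^{\circ p}(z)=z$, together with the holomorphic dependence of the (finitely many) singular values on $\lambda$, shows that the nonescaping-hyperbolic locus in the parameter space of $\Fli$ is open; let $H_\infty$ be the connected component containing $\lambda_0$. I will show $\wt H = H_\infty$.

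For the inclusion $H_\infty \subseteq \wt H$, pick a compact connected $K \subseteq H_\infty$ containing $\lambda_0$. On $K$ the attracting cycles of $f_{\infty,\lambda}$ move holomorphically, with multipliers uniformly inside $\D$ and basins that absorb every singular value. By $\chi_\dyn$-convergence, that is, locally uniform convergence of $f_{n,\lambda}$ to $f_{\infty,\lambda}$ together with Hausdorff convergence of the singular-value sets, the implicit function theorem produces, for all large $n$, nearby attracting cycles of $f_{n,\lambda}$ on $K$; using attracting fundamental domains around these cycles and the Hausdorff convergence of singular values, one sees that the singular values of $f_{n,\lambda}$ also land in the respective basins. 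Hence $K \subseteq H_n$ for all large $n$, so by the kernel property $K \subseteq \wt H$; exhausting $H_\infty$ by such $K$ gives $H_\infty \subseteq \wt H$.

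The main step is the reverse inclusion $\wt H \subseteq H_\infty$. Given $\lambda^* \in \wt H$, choose a compact connected $K \subseteq \wt H$ containing both $\lambda_0$ and $\lambda^*$; by the kernel property, $K \subseteq H_n$ for all large $n$. Let $\mu_n\colon K \to \D$ be the holomorphic multiplier of the attracting cycle of $f_{n,\cdot}$ selected so that at $\lambda_0$ it tracks the attracting cycle of $f_{\infty,\lambda_0}$; this is well-defined since $H_n$ is connected. By Montel's theorem, $\{\mu_n\}$ is a normal family of $\D$-valued holomorphic maps on $K$, and any subsequential limit $\mu$ is a holomorphic map with $|\mu|\le 1$ satisfying $|\mu(\lambda_0)|<1$ by the locally uniform convergence of iterates. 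The maximum modulus principle then forces $|\mu|<1$ throughout the connected set $K$. A standard compactness argument (every subsequence of $\mu_n(\lambda^*)$ has a further subsequence converging into $\D$, so the whole sequence has its cluster set in $\D$) gives that $|\mu_n(\lambda^*)|$ is bounded strictly below $1$ for large $n$. Extracting a convergent subsequence yields a genuine attracting cycle of $f_{\infty,\lambda^*}$; combined with the Hausdorff convergence of singular values, which places the singular values of $f_{\infty,\lambda^*}$ in the basin of this cycle, we obtain that $f_{\infty,\lambda^*}$ is nonescaping-hyperbolic. Running the same argument at every $\lambda \in K$ shows that $K$ lies entirely in the hyperbolic locus of $\Fli$, so by connectedness $K \subseteq H_\infty$, and in particular $\lambda^* \in H_\infty$.

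The main obstacle is the last step, namely excluding the parabolic possibility $|\mu_\infty(\lambda^*)|=1$ in the limit. The maximum modulus argument succeeds precisely because the hypothesis that case $(i)$ fails supplies a genuine hyperbolic anchor $\lambda_0 \in \wt H$, so the normal family is pinned at a value inside $\D$. Without such an anchor the multipliers $\mu_n$ can approach the unit circle uniformly and a parabolic (hence non-hyperbolic) parameter can appear in the kernel — which is exactly why case $(i)$ genuinely occurs and the theorem is a real dichotomy rather than always the cleaner conclusion $(ii)$.
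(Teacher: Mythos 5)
Your overall strategy is reasonable and the forward inclusion $H_\infty\subseteq\widetilde H$ is essentially correct (it is the easy direction; the paper proves it the same way, via openness of nonescaping-hyperbolicity under $\chi_{\dyn}$). But the reverse inclusion as written has two genuine gaps.

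First, the multiplier map $\mu_n\colon K\to\D$ is not well-defined. A nonescaping-hyperbolic entire map may have several attracting cycles of different periods (the paper only shows $\Attr(f)$ is \emph{finite}), and there is no canonical one to track. You write ``the attracting cycle of $f_{\infty,\lambda_0}$,'' implicitly assuming uniqueness. Moreover, even if one picks a cycle of $f_{n,\lambda_0}$ near a chosen cycle of $f_{\infty,\lambda_0}$, there is no reason that this particular cycle continues holomorphically to all of $H_n$ (only cycles that capture a singular value are protected from becoming indifferent inside $H_n$), so the ``well-defined since $H_n$ is connected'' justification does not go through in general.

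Second, and more fundamentally: even if you produce one attracting cycle of $f_{\infty,\lambda^*}$ by passing multipliers to the limit, this does \emph{not} show that $f_{\infty,\lambda^*}$ is nonescaping-hyperbolic. That would require that \emph{every} singular value of $f_{\infty,\lambda^*}$ be absorbed by \emph{some} attracting cycle. Your appeal to ``Hausdorff convergence of singular values placing them in the basin of this cycle'' is not a valid step: Hausdorff convergence of the sets $S(f_{n,\lambda^*})\to S(f_{\infty,\lambda^*})$ says nothing about the forward orbits of those points under the limit map $f_{\infty,\lambda^*}$, and basins are not continuous under perturbation in any directly usable sense here. This is precisely the point where most of the work has to happen.

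The paper avoids both issues by working directly with the forward orbits of singular values. It constructs a holomorphic parametrization $w(\lambda)$ of each singular value on a simply connected $U\Subset\widetilde H$, normalizes so that two persisting repelling periodic points sit at $0$ and $1$, and deduces from complete invariance of the Fatou set that the orbit maps $S_\nu(\lambda)=f_\lambda^\nu(w(\lambda))$ omit $\{0,1\}$; Montel then gives normality of $\{S_\nu\}_{\nu}$ (Theorem~\ref{normality}). With that normal family in hand, the proof of Theorem~\ref{maintheorem} argues by contradiction: assuming $\lambda_0\in\widetilde H\cap\partial H_\infty$, the attracting point $a(\lambda)$ that attracts $w(\lambda)$ on $B\cap H_\infty$ continues analytically and stays bounded on all of $B$, so \emph{each} singular value is still absorbed by an attracting cycle at $\lambda_0$, contradicting $\lambda_0\in\partial H_\infty$. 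Tracking singular orbits rather than a single attracting cycle is exactly what makes the conclusion apply to the whole postsingular set. Your multiplier argument, even when repaired, only yields persistence of one cycle, which is insufficient.
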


\begin{Remark}
Our result is a natural generalization of a theorem by 
Krauskopf and Kriete, who  
considered holomorphic families 
$\Fln=\{f_{\lambda}: \lambda\in\C\}$, $n\in\N\cup\{\infty\}$, of entire maps 
for which the sets of singular values are holomorphically parametrized
and for which there exists an integer $N$ with 
$\vert S(f_{n,\lambda})\vert = N<\infty$ for all $n\in\N\cup\{\infty\}$ 
and all $\lambda\in\C$.
They proved the same conclusions as in Theorem \ref{thm_main}, 
provided that $\Fln\to\Fli$ uniformly
on compact subsets of $\C\times\C$ \cite{krauskopf_kriete}. 
Our proof of the above Theorem follows the same idea as in 
\cite{krauskopf_kriete}.
\end{Remark}

The first case in Theorem \ref{thm_main} 
does indeed occur; 
an example is given in Section \ref{section_main}. Nevertheless, 
parameters that belong to a kernel 
define maps which exhibit certain stability.
Under the same assumptions (and notations) as in Theorem \ref{thm_main} 
we prove the following result.
\begin{Theorem}
\label{thm_main2}
Let $\lambda$ belong to a kernel $\wt{H}$. Then 
$f_{\lambda}\in\Fli$ is a $J$-stable function. 
\end{Theorem}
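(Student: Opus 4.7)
The plan is to verify $J$-stability at $\lambda_0\in\wt{H}$ by establishing the Mañé-Sad-Sullivan criterion (adapted to bounded-type entire maps by Eremenko-Lyubich): every repelling periodic point of $f_{\lambda_0}$ admits a holomorphic continuation as a repelling periodic point of $f_\mu$ on a full neighborhood of $\lambda_0$. Density of repelling cycles in $\J(f_{\lambda_0})$ together with the $\lambda$-lemma then upgrades this to a holomorphic motion of $\J(f_{\lambda_0})$, which is $J$-stability.

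By definition of the kernel, I first fix an open neighborhood $V$ of $\lambda_0$ with $\cl{V}\subset\wt{H}$ and $\cl{V}\subset H_n$ for every $n\geq N_0$. For such $n$ and any $\mu\in V$, the map $f_{n,\mu}$ is nonescaping-hyperbolic, and hence $J$-stable on $H_n$. Given a repelling periodic point $z_0$ of $f_{\lambda_0}$ of period $p$, the locally uniform convergence $f_{n,\lambda_0}^p\to f_{\lambda_0}^p$ built into $\chi_{\dyn}$-convergence combined with Hurwitz's theorem produces, for $n$ large, a unique nearby periodic point $z_n^\ast\to z_0$ of $f_{n,\lambda_0}$, automatically repelling by continuity of the multiplier. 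Since $\lambda_0\in H_n$ and $f_{n,\mu}$ is $J$-stable there, $z_n^\ast$ extends to a holomorphic function $z_n\colon V\to\C$ with $f_{n,\mu}^p(z_n(\mu))=z_n(\mu)$ and $|(f_{n,\mu}^p)'(z_n(\mu))|>1$ throughout $V$.

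The heart of the argument is to show that $\{z_n\}_{n\geq N_0}$ is a normal family on $V$. Hyperbolicity gives $P(f_{n,\mu})\subset\F(f_{n,\mu})$, whereas $z_n(\mu)\in\J(f_{n,\mu})$; hence $z_n(\mu)\neq f_{n,\mu}^k(s_{n,i}(\mu))$ for every iterate $k$, every singular value $s_{n,i}$, and every $\mu\in V$. I would exploit the holomorphic parametrization of the singular values together with $\chi_{\dyn}$-continuity to extract three holomorphically moving, eventually well-separated targets from iterated singular orbits, and apply Montel's theorem with moving exceptional values to deduce normality. A subsequential locally uniform limit $z_n\to z\colon V\to\C$ then satisfies $z(\lambda_0)=z_0$ and $f_\mu^p(z(\mu))=z(\mu)$. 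Since $|(f_{\lambda_0}^p)'(z_0)|>1$, the holomorphic multiplier $\rho(\mu)=(f_\mu^p)'(z(\mu))$ has $|\rho(\mu)|>1$ on some smaller neighborhood $V'\subset V$, providing the desired repelling continuation.

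The principal obstacle is the Montel step: one must select three holomorphic functions $\mu\mapsto f_\mu^{k_j}(s_{i_j}(\mu))$ whose values remain mutually distinct on $V$ and, uniformly in $n$, distinct from $z_n(\mu)$. Such separation should follow from the $\chi_{\dyn}$-convergence of singular values together with the observation that, inside a kernel of hyperbolic components, the postsingular sets cannot all collapse onto a single grand orbit; in degenerate subcases a Zalcman-type rescaling argument would supply an alternative route to normality. Once normality is secured, ruling out the possibility that the limit cycle becomes globally indifferent on $V$ follows from an open mapping argument applied to $\rho$; iterating the construction over all repelling cycles of $f_{\lambda_0}$ and invoking the $\lambda$-lemma then yields $J$-stability.
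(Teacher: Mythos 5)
Your route is genuinely different from the paper's. The paper argues by contraposition: a non-$J$-stable $\lambda_0$ produces a repelling cycle whose multiplier reaches the unit circle along a path in a small neighbourhood $\Lambda\Subset\wt{H}$; the Minimum Modulus Principle pushes it to an attracting cycle at some nearby $\tilde\lambda$, which must absorb a singular value; Theorem~\ref{normality} (normality of $\lambda\mapsto f_\lambda^{\nu}(w(\lambda))$ for a holomorphically parametrized singular value $w$) then continues that attracting cycle holomorphically back to $\lambda_0$, contradicting that the original cycle was repelling. You instead attempt a direct Ma\~{n}\'e--Sad--Sullivan verification, continuing repelling cycles of $f_{n,\mu}$ and passing to a normal limit.

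Your continuation step is fine and needs less than you invoke: inside a nonescaping-hyperbolic component $H_n$ no cycle has multiplier on the unit circle, so the Implicit Function Theorem alone continues $z_n^{\ast}$ holomorphically over any simply connected $V$ with $\cl{V}\subset H_n$; you do not need, and should not appeal to, $J$-stability of $f_{n,\mu}$ (the paper never establishes it for families with infinitely many, merely holomorphically parametrized, singular values). The genuine gap is the Montel step, and your chosen omitted values are the wrong ones. Over a nonescaping-hyperbolic region the postsingular orbit points accumulate on the finitely many attracting cycles; functions of the form $\mu\mapsto f_{n,\mu}^{k_j}(s_{n,i_j}(\mu))$ can coalesce, escape to $\infty$, or fail to stay uniformly separated as $n\to\infty$, and nothing in `dsa' or `hsa' supplies the ``three well-separated moving targets'' you postulate; the Zalcman fallback is never executed. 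The correct device is the one used in the proof of Theorem~\ref{normality}: pick two further repelling cycles $a_0\neq b_0$ of $f_{\lambda_0}$ disjoint from that of $z_0$, continue them to $a_n(\mu)$, $b_n(\mu)$ over $V$ by the same Implicit Function Theorem argument, and note that inside $H_n$ they cannot collide with each other or with $z_n(\mu)$ (a collision would force an indifferent cycle) and stay finite. Then $z_n$ omits $a_n$, $b_n$, $\infty$; Montel applies; and the remainder of your outline goes through. With that substitution your proof and the paper's become dual: both reduce to Montel with normalizing repelling cycles as the omitted set, but the paper makes the singular orbit normal while you make the continued repelling cycle normal.
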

We define $J$-stability analogously to the classical definition for 
rational maps 
(see Definition \ref{dfn_Jstable}). 

Given an entire function $f$ together with a sequence of entire maps 
$f_n$ for which $\chi_{\dyn}(f_n,f)\to 0$ when $n\to\infty$,
there is a natural way -- using quasiconformal equivalence classes --
to construct suitable holomorphic families $\Fln\ni f_n$ that
satisfy the assumptions of Theorem \ref{thm_main}. 
For a lot of explicit functions, including many examples
that have been of particular dynamical interest in the last decades, 
non-trivial approximations in the sense of $\chi_{\dyn}$ are known.
However, for an \emph{arbitrary} (transcendental) entire map 
there seems to be no 
general concept of how to establish a non-trivial sequence of maps $f_n$ 
such that $\chi_{\dyn}(f_n,f)\to 0$. This 
leaves open an interesting function-theoretic question.

\subsection*{Structure of the article}
In Section \ref{section_pre_cons} we introduce 
basic definitions, notations and preliminary concepts.
Section \ref{section_main} addresses the proofs of Theorems 
\ref{thm_main} and \ref{thm_main2}. In the final part, we discuss examples
to which our results can be applied.

\subsection{Acknowledgements}
I want to thank Lasse Rempe for his encouragement to extend the results
from my Diploma thesis and for his 
continual help and support.
I would also like to thank Hartje Kriete for introducing me
to the subject during my time as a graduate student in G\"{o}ttingen.

\section{Preliminary constructions}
\label{section_pre_cons}
We denote the complex plane by $\C$ and the 
Riemann sphere by $\widehat{\C}:=\C\cup\lbrace\infty\rbrace$. 
For a subset $V$ of a metric space $M$ we denote the 
$\varepsilon$-neighbourhood of $V$ by $U_{\varepsilon}(V)$. 
We will write $A\Subset B$ 
if $A$ is a relatively compact subset of $B$, 
i.e., if $\cl{A}$ is compact and contained in $B$.
If not stated differently, 
the boundary $\partial A$ and the closure $\overline{A}$ 
of a set $A\subset\C$ is always understood to be taken 
relative to the complex plane.
Throughout this article, we will mainly consider
nonconstant, nonlinear entire functions  $f:\C\rightarrow\C$,
hence $f$ will either be a polynomial of degree $\geq 2$
or a transcendental entire map. Recall that the space of all
such maps is denoted by $\Hol^{*}(\C)$.

We denote by $C(f):=\{w\in\C:\exists z: f^{'}(z)=0\text{ and } f(z)=w\}$ 
the set of all critical values
and by $A(f)$ the set of all (finite) asymptotic values of $f$. 
Recall that a point $w\in\C$ is an asymptotic
value of $f$ if there is a path to $\infty$ along which
$f$ converges to $w$. 
Note that a polynomial has no asymptotic values.
The set $S(f)$ of \emph{singular values} of $f$  
is defined to be the smallest closed set in $\C$ such that  
$f: \C\setminus f^{-1}(S(f))\rightarrow \C\setminus S(f)$
is a covering map. 
It is well-known that $S(f)$ can be written as 
$S(f)=\overline{C(f)\cup A(f)}$ (see e.g. \cite[Lemma 1.1]{goldberg_keen}).
We define 
\begin{align*}
\Hol^{*}_b(\C):=\{ f\in\Hol^{*}(\C): S(f)\text{ is bounded}\}
\end{align*}
to be the set of all nonconstant, nonlinear entire maps with bounded 
singular sets. 
Finally, we denote the \emph{postsingular set} of $f$ by 
$P(f):=\overline{\bigcup_{n\geq 0} f^{n}(S(f))}$.

Let $w\in\C$ be a periodic point of $f$ of period $k$, that is, $f^k(w)=w$ 
and $k>0$ is minimal with this property.
The set $O(w):=\lbrace w, f(w),...,f^{k-1}(w)\rbrace$ 
is called the (forward) \emph{orbit} or \emph{cycle} of $w$. 
The \emph{multiplier} $\mu(w)$ of $w$ is defined as 
$\mu(w):= (f^k)^{'}(w)$. 
We call $w$ \emph{attracting} if $\vert\mu(w)\vert < 1$, 
\emph{repelling} if $\vert\mu(w)\vert > 1$ 
and \emph{indifferent} if $\vert\mu(w)\vert = 1$.
An attracting periodic point $w$ with 
$\mu(w)=0$ is called \emph{superattracting}.
If $w$ is a periodic point then $\mu(f^i(w))=\mu(w)$ for all $i$, 
hence we can extend the classification of $w$ to the whole cycle $O(w)$
and speak of the multiplier of the cycle etc.

If $O$ is an attracting periodic cycle of $f$ of period $n$, then
the \emph{basin of attraction} or \emph{attracting basin of $O$} 
is the open set $A(O)$  consisting of all points $z$
for which the successive iterates $f^{n}(z), f^{2n}(z),\dots$ converge 
to some point of $O$. If $w\in O$ then the component $A^{*}(w)$ of
$A(O)$ that contains $w$ is called the \emph{immediate basin of attraction of $w$}.
(The immediate basin of the cycle $O$ is then the union of 
the immediate basins of the points in $O$.)
We denote the set of all points that converge to an attracting cycle 
of $f$ by $\A(f)$.

Let $f\in\Hol^{*}(\C)$.
The objects of main dynamical interest related to $f$ are 
the \emph{Fatou set} $\F(f)$ of $f$, 
defined as the set of all points 
that have a neighbourhood in which the iterates
$(f^n)_{n\in\N}$
form a normal family in the sense of Montel,
and its complement the \emph{Julia set} $\J(f):= \C\setminus\F(f)$ of $f$.

\begin{Remark}
A polynomial $f$ extends naturally to a map 
$\widehat{f}:\widehat{\C}\to\widehat{\C}$ with $\widehat{f}(\infty)=\infty$.
In this case, $\infty$ is a critical point and a critical value of $\widehat{f}$.
However, the dynamically relevant sets are
deliberately chosen to be 
subsets of the plane (rather than the sphere) 
since the consideration of the point at $\infty$ 
does not contribute to the
understanding of the dynamics: for every transcendental entire map, 
$\infty$ is an essential singularity, while for a polynomial, $\infty$ is its only
preimage.
\end{Remark}

The \emph{escaping set} of $f$ is defined to be
\begin{align*}
I(f):=\{z\in\C: \lim_{n\to\infty} f^n(z) = \infty\}.
\end{align*}
A point $z\in I(f)$ is called an \emph{escaping point}. 
For a polynomial $f$ (of degree $\geq 2$), the escaping set belongs to $\F(f)$,
since the point at $\infty$ can be considered as a superattracting fixed 
point of the holomorphic extension of $f$ to $\widehat{\C}$ 
and the escaping set as its basin of attraction. In contrast, 
if $f\in\Hol^{*}_b(\C)$ is transcendental entire, then the relation
$\J(f)=\overline{I(f)}$ holds \cite[p. 344]{eremenko_1}.

For more background in holomorphic dynamics, we refer
to the expository works \cite{bergweiler_1,milnor}.
The classification of components of the Fatou set of
an entire map will be used implicitly throughout this article  and
can be found in \cite{bergweiler_1}.

\subsection*{Nonescaping-hyperbolic maps}

Let us start with the definitions of hyperbolic and 
nonescaping-hyperbolic functions.

\begin{Definition}
\label{def_hyp}
A map $f\in\Hol^{*}(\C)$ is called \emph{hyperbolic} 
if $S(f)$ is bounded and $P(f)\subset\F(f)$.
We say that $f$ is  
\emph{nonescaping-hyperbolic} if $P(f)$ is a bounded 
subset of $\F(f)$. 
\end{Definition}

Clearly, every nonescaping-hyperbolic map is also hyperbolic.
Recall that the set $S(f)$ is closed by definition, hence if 
$f$ is hyperbolic then 
$S(f)$ is a compact subset of $\F(f)$.
Similarly for $P(f)$ when $f$ is nonescaping-hyperbolic. 

The next observation will be used frequently;
it follows mainly from classical results in holomorphic dynamics 
but we could not localize a reference, 
hence we include a proof for completeness.

\begin{Proposition}
\label{prop_hyp}
Let $f\in\Hol^{*}(\C)$. 
Then $f$ is nonescaping-hyperbolic if and only if 
$S(f)$ is a bounded subset of $\A(f)$.

Let $f$ be nonescaping-hyperbolic. Then $\Attr(f)$ is finite, 
and if $f$ is transcendental entire then
$\F(f)=\A(f)\neq\emptyset$, otherwise $\F(f)=\A(f)\cup I(f)$.
\end{Proposition}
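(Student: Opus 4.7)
The plan is to prove the biconditional by showing each implication, and then derive the finiteness of $\Attr(f)$, the nonemptiness of $\A(f)$, and the decomposition of $\F(f)$ from the same classification of Fatou components.

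For ``nonescaping-hyperbolic $\Rightarrow S(f)\subset\A(f)$'', I fix $s\in S(f)$ and argue that the Fatou component $U$ containing $s$ is eventually periodic to an attracting cycle. Since $P(f)\subset\F(f)$ is compact and Fatou components are open, $P(f)$ meets only finitely many Fatou components; consequently the forward orbit of $s$ visits only finitely many Fatou components, so $U$ is eventually periodic. The resulting cycle cannot be an escape-to-$\infty$ component (a Baker domain, or the basin of $\infty$ for a polynomial), because $f^n(s)\to\infty$ would make $P(f)$ unbounded; it cannot be parabolic, because the parabolic periodic point would lie in $\J(f)\cap P(f)$; and it cannot be a Siegel cycle, because the classical fact $\partial V\subset P(f)$ for a Siegel disk $V$ would again place a point of $\J(f)$ in $P(f)$. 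Each case contradicts $P(f)\subset\F(f)$, so the cycle is attracting and $s\in\A(f)$.

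For the converse, assume $S(f)$ is bounded and $S(f)\subset\A(f)$. By compactness of $S(f)$ and openness of $\A(f)$, I cover $S(f)$ by finitely many compact sets $K_1,\ldots,K_m$, each lying in a single Fatou component $U_j$ in the basin of some attracting cycle $O_j$. On each $K_j$ the iterates $(f^n)$ converge uniformly to $O_j$; hence $f^n(K_j)$ eventually lies in a fixed compact neighborhood of $O_j$ within $\F(f)$, and combined with the finitely many initial iterates this shows $\overline{\bigcup_n f^n(K_j)}$ is a compact subset of $\F(f)$. Taking the union over $j$ gives that $P(f)$ is compact and contained in $\F(f)$.

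For the second part, finiteness of $\Attr(f)$ follows from the classical fact that every attracting cycle captures at least one singular value in its immediate basin, so $\#\Attr(f)$ is bounded by the (finite) number of Fatou components meeting $S(f)\Subset\F(f)$; nonemptiness of $\A(f)$ follows because $S(f)\neq\emptyset$ for any $f\in\Hol^{*}(\C)$ and $S(f)\subset\A(f)$. For the decomposition of $\F(f)$, the case analysis above shows every periodic Fatou component is attracting, hence every eventually periodic component lies in $\A(f)$ for transcendental $f$, or in $\A(f)\cup I(f)$ for polynomials (where $I(f)$ is the basin of the attracting fixed point at $\infty$). It remains to exclude wandering domains: for polynomials this is Sullivan's theorem; for transcendental $f\in\Hol^{*}_b(\C)$, a limit function of $(f^n)$ on a wandering domain $U$ is constant and lies in $P(f)\cup\{\infty\}$ by classical wandering-domain theory. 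A finite limit $c\in P(f)\subset\F(f)$ would force $f^{n_k}(U)$ into the single Fatou component containing $c$ for a subsequence, contradicting wandering; and the limit $\infty$ would place $U$ in $I(f)$, excluded by the Eremenko--Lyubich inclusion $I(f)\subset\J(f)$ for class $\B$. The main obstacle is this transcendental wandering-domain step, which combines several classical results; the rest of the argument is a careful case analysis of the Fatou-component classification together with standard uniform-convergence arguments on attracting basins.
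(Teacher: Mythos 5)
Your proof is correct and rests on essentially the same ingredients as the paper's: the Eremenko--Lyubich fact that no Fatou point escapes for class $\B$ maps (ruling out Baker domains and, for transcendental $f$, all of $I(f)\cap\F(f)$), the Bergweiler--Haruta--Kriete--Meier--Terglane result on limit functions in wandering domains, the fact that Siegel boundaries and parabolic points lie in $P(f)$, and the fact that every attracting cycle captures a singular value in its immediate basin. The only organizational differences are minor: you argue the forward implication ``per singular value'' (eventual periodicity of the component containing $s$, since the compact $P(f)\subset\F(f)$ meets only finitely many components) instead of the paper's global classification of all Fatou components, you unpack the wandering-domain citation into an explicit constant-limit-function dichotomy, and you handle polynomials directly via Sullivan's theorem rather than citing Milnor; none of these changes the substance of the argument.
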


\begin{proof}
The statement is well-known for polynomials \cite[Theorem 19.1]{milnor} 
hence we can restrict to 
transcendental entire maps.

Assume that $S(f)$ is bounded and every $w\in S(f)$ converges to an attracting
cycle of $f$. Then the components of $\A(f)$ form an open cover 
of $S(f)$ and since $S(f)$ is compact, there is a finite subcover.
In particular, it follows that $P(f)$ is a compact subset of $\A(f)\subset\F(f)$,
hence $f$ is nonescaping-hyperbolic. 

Now assume that 
$P(f)$ is a compact subset of $\F(f)$ and 
let $U$ be a component of $\F(f)$. 
Then the iterates $f^n(z)$ do not converge to $\infty$ for any $z\in U$ 
\cite[Theorem 1]{eremenko_lyubich_2}, 
hence $U$ is not a Baker domain.
This means that $U$ is either a component of an attracting or 
parabolic basin, a Siegel disk or 
a wandering domain \cite[Theorem 6]{bergweiler_1}. 
Assume that $U$ is a wandering domain and let $z\in U$. Then $f^n(z)$ 
accumulates at points in $\J(f)\cap P(f)$ \cite[Theorem]{bergweiler_etal} 
but this set is empty. 
Hence $U$ is not a wandering domain. 
For the same reason, 
$U$ is not a Siegel disk or a component of a 
parabolic basin (see \cite[Theorem 7]{bergweiler_1}),
hence $U$ is a component of a basin of attraction of an
attracting periodic orbit. Hence 
$\F(f)=\A(f)$ and 
every singular value converges under iteration to 
an attracting periodic orbit.

Now let $f$ be nonescaping-hyperbolic. We have shown that
only finitely many attracting cycles contain a singular value in their 
basins, and since every attracting cycle must intersect $S(f)$ 
\cite[Theorem 7]{bergweiler_1}, it follows that $\Attr(f)$ is finite.
\end{proof}

Note that the proof of Proposition \ref{prop_hyp} implies that
for a transcendental entire map, hyperbolicity and 
nonescaping-hyperbolicity are equivalent. 
Furthermore, a polynomial $f$ is hyperbolic but not
nonescaping-hyperbolic if and only if $S(f)\cap I(f)\neq\emptyset$.

By Definition \ref{def_hyp}, every (nonescaping-)hyperbolic 
transcendental entire function belongs to the \emph{Eremenko-Lyubich class} 
\begin{eqnarray*}
\B:=\lbrace f:\C\rightarrow\C \text{ transcendental entire}: S(f) \text{ is bounded}\rbrace. 
\end{eqnarray*}

Finally, we would like to remark 
that an entire map for which every singular value is absorbed
by an attracting cycle does not necessarily have dynamics similar to
a hyperbolic function.
The requirement 
that $S(f)$ is a bounded set is indeed crucial for Proposition \ref{prop_hyp} to hold:
As Example D in \cite{kisaka_shishikura} shows, 
there is a transcendental entire map $f$ with wandering domains for 
which all singular values are mapped by $f$ to an attracting fixed point.

\subsection*{Hausdorff and kernel convergence}
\label{subs_con_sets}
Let $M$ be a metric space. 
Recall that the \emph{Hausdorff distance} between two compact 
sets $A,B\subset M$ is defined to be
\begin{eqnarray*}
d_H(A,B):=\inf \lbrace \varepsilon>0: A\subset U_{\varepsilon}(B),\;B\subset U_{\varepsilon}(A) \rbrace.
\end{eqnarray*}

For studying convergence of open connected subsets of $M$ 
we will define a concept analogous to the \emph{Carath\'eodory} or 
\emph{kernel convergence} for domains in the complex plane. 
For more details, see \cite[$\S 5$]{golusin}.

\begin{Definition}[Kernel]
\label{def_kernel}
Let $o\in M$ and let $O_n\subset M$, $n\in\N$, 
be open connected sets containing $o$. 
The \emph{kernel} of the sequence $(O_n)$ (w.r.t. $o$) 
is the largest open connected set $O\ni o$ 
such that each compact set $K\subset O$ is contained in 
all but finitely many $O_n$.
\end{Definition}

We call the point $o$ the \emph{marked point} of the sequence $(O_n)$.
Clearly, the existence of a kernel is equivalent to the existence 
of a neighbourhood of $o$ which is contained in all but finitely many $O_n$. 

We say that the sequence $(O_n)$ \emph{converges} to $O$ (\emph{as kernels}) 
and write $O_n\rightarrow O$ if $O$ is 
a kernel of each subsequence of $(O_n)$. 
The middle example in 
Figure \ref{figure_1} shows a sequence that does not converge
to its kernel. 
 
Observe that a sequence $(O_n)$ can have more than one kernel 
(see the left-hand example in Figure \ref{figure_1}), 
and each of them is specified by the choice of a marked point. 
Now let $M$ be a locally compact metric space, e.g., an analytic manifold,
and let $O$ be a kernel of $O_n$. 
Since, by definition, every kernel is open and since $M$ is locally compact,
every point in $O$ has a compact neighbourhood which is contained in $O$. 
This means that we can choose any point $o\in O$ 
to be the marked point. 
Hence we will talk about the sets $O_n$ and $O$ 
without mentioning the marked point if it is implicit from 
the context which point is meant.

We have the following relation between Hausdorff and kernel convergence.
\begin{Proposition}
\label{prop_kernel_hausdorff}
Let $K_n, K$ be nonempty compact subsets of a locally compact metric space $M$. 
Then $d_H(K_n,K)\rightarrow 0$ as $n\rightarrow\infty$ 
if and only if the following two conditions hold:
\begin{itemize}
\item every component $O$ of $K^c:=M\backslash K$ is a 
kernel of a sequence of components $O_n$ of $K_n ^c:=M\backslash K_n$,
\item every kernel of an infinite sequence $(O_{n_k})$ of 
components of $K_{n_k}^c$ is a component of $K^c$.
\end{itemize}
\end{Proposition}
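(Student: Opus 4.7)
The plan is to prove both implications separately, using throughout that components of open sets in a locally compact, locally path-connected space are themselves open and path-connected.

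For the forward implication, assume $d_H(K_n,K)\to 0$. I would first establish $(i)$: fix a component $O$ of $K^c$ and a marked point $o\in O$. Since $\dist(o,K)>0$, Hausdorff convergence forces $o\in K_n^c$ for all large $n$; take $O_n$ to be the component of $K_n^c$ containing $o$. The main step in showing that $O$ is the kernel of $(O_n)$ with respect to $o$ is the inclusion of $O$ into the kernel: for any compact $K'\subset O$ I would enclose $K'\cup\{o\}$ in a compact connected set $L\subset O$ (using local path-connectedness), note that $\dist(L,K)>0$, and conclude $L\cap K_n=\emptyset$ eventually, whence $L\subset O_n$ by connectedness and $o\in L$. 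The reverse inclusion proceeds by contradiction: a kernel point $z\in K$ would admit a small compact neighbourhood eventually sitting inside $O_n\subset K_n^c$, yet Hausdorff convergence produces points of $K_n$ arbitrarily close to $z$; and a kernel point in a different component of $K^c$ would force the kernel to meet $K$ by connectedness, reducing to the previous case. For $(ii)$, given a kernel $\wt O$ of a subsequence $(O_{n_k})$ marked at $\wt o$, the same compact-neighbourhood argument yields $\wt O\subset K^c$; hence $\wt O$ lies in the unique component $O$ of $K^c$ through $\wt o$, and applying the argument of $(i)$ to the subsequence produces $O\subset \wt O$, so $\wt O=O$.

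For the reverse implication I would argue by contradiction. Suppose $d_H(K_{n_k},K)\ge\eps$ along some subsequence; one of the two asymmetric distances supplies the witness. If there are $w_k\in K$ with $\dist(w_k,K_{n_k})\ge\eps$, then compactness of $K$ gives $w_k\to w_\infty\in K$, the ball $B_{\eps/2}(w_\infty)$ is eventually disjoint from $K_{n_k}$ and so lies in some component $O_{n_k}$ of $K_{n_k}^c$ containing $w_\infty$; this sequence, marked at $w_\infty$, has a kernel $\wt O\ni w_\infty$, which by $(ii)$ is a component of $K^c$, contradicting $w_\infty\in K$. If instead there are $z_k\in K_{n_k}$ with $\dist(z_k,K)\ge\eps$, I would extract a convergent subsequence $z_k\to z_\infty\in K^c$, let $O$ be the component of $K^c$ containing $z_\infty$, and apply $(i)$ to produce components $\wt O_n$ of $K_n^c$ with kernel $O$; then a small closed ball around $z_\infty$ sits eventually in $\wt O_n$ and so is disjoint from $K_n$, contradicting $z_k\in K_{n_k}$ inside that ball.

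I expect the main technical obstacle to be extracting the convergent subsequence in this second case: the witnesses $z_k$ need not be bounded, because a priori the $K_n$ could drift to infinity while $(i)$ still holds. In the paper's setting $M$ carries a proper metric, so this scenario forces the diameter of $K_n$ to blow up, which one can exclude by testing a fixed relatively compact component of $K^c$ against $(i)$; alternatively one can run the whole argument in a one-point compactification $M\cup\{\infty\}$ after suitably reinterpreting kernels. Apart from this boundary subtlety, the remainder of the proof amounts to bookkeeping with connectedness and elementary Hausdorff estimates.
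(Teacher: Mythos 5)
First, a remark on the comparison you were asked to make: the paper explicitly \emph{omits} a proof of Proposition~\ref{prop_kernel_hausdorff}, stating that it is elementary and not needed elsewhere, so there is no in-text argument to measure your attempt against; the assessment has to be on the merits.

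Your treatment of the forward implication is sound (modulo the standing assumption, which you state and which is in any case needed for the statement to parse, that $M$ is locally connected so that components of open sets are open). The gap you flag in the reverse implication, however, is not a ``boundary subtlety'' that bookkeeping can repair: the reverse implication is \emph{false} as stated. Take $M=\R$, $K=\{0\}$ and $K_n=\{0,n\}$. Condition $(i)$ holds: $(-\infty,0)$ is the kernel of the constant sequence $O_n=(-\infty,0)$, and $(0,\infty)$ is the kernel of $O_n=(0,n)$ with marked point $o=1$. Condition $(ii)$ also holds: the components of $K_n^c$ are $(-\infty,0)$, $(0,n)$ and $(n,\infty)$, and a sequence $(O_{n_k})$ admits a kernel only when a single marked point lies in all but finitely many $O_{n_k}$; for $o<0$ this forces $O_{n_k}=(-\infty,0)$ eventually, giving kernel $(-\infty,0)$, while for $o>0$ it forces $O_{n_k}=(0,n_k)$ eventually, giving kernel $(0,\infty)$; both are components of $K^c$. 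Yet $d_H(K_n,K)=n\to\infty$. Your first proposed fix (``testing a fixed relatively compact component of $K^c$ against $(i)$'') fails here because $K^c=\R\setminus\{0\}$ has no relatively compact component. Your second suggestion (pass to the one-point compactification) does yield a correct statement --- in $S^1$ the single component $S^1\setminus\{0\}$ of $K^c$ is not a kernel, so $(i)$ genuinely fails there --- but that is a different proposition about a different ambient space, with different components and kernels. The honest conclusion is that the reverse implication requires an additional hypothesis controlling escape of the $K_n$ (for instance $M$ compact, or $\bigcup_n K_n$ relatively compact), and that the proposition as printed in the paper is, strictly speaking, incorrect; this is arguably why the author was content to omit its proof and use it only as heuristic motivation.
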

We will omit the proof since it is elementary and 
follows mainly from the definitions of kernel and Hausdorff convergence,
and since we do not require the statement for any proof 
in this article; its purpose is more the illustration of 
how the given concepts of convergence relate to each other.
The right-hand example in Figure \ref{figure_1} shows the 
necessity of the second requirement in Proposition \ref{prop_kernel_hausdorff}.

\begin{figure}
\psfrag{K1}{$K_1$}
\psfrag{K2}{$K_2$}
\psfrag{K3}{$K_3$}
\psfrag{D2n}{$D_{2n}$}
\psfrag{D2n+1}{$D_{2n+1}$}
\psfrag{K}{$\mathbb{H}_{< 0}$}
\psfrag{i/n}{$\frac{i}{n}$}
\psfrag{-i/n}{$\frac{-i}{n}$}
\psfrag{null}{$0$}
\psfrag{n}{$n$}
\centering
\begin{minipage}{.32\linewidth}
\raggedleft
\includegraphics[width=\linewidth]{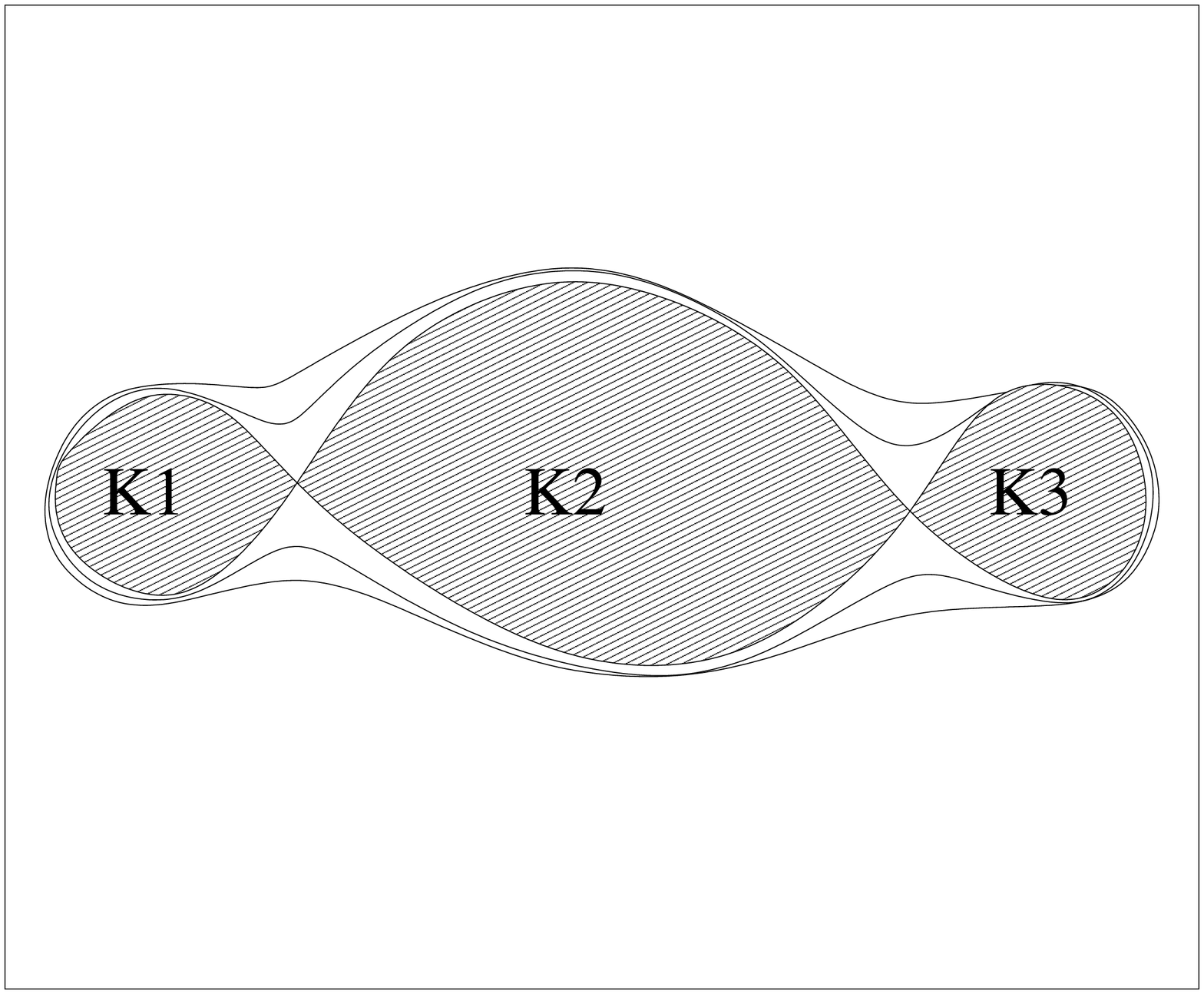}

\end{minipage}
\hspace{.002\linewidth}
\begin{minipage}{.32\linewidth}
\centering
\includegraphics[width=\linewidth]{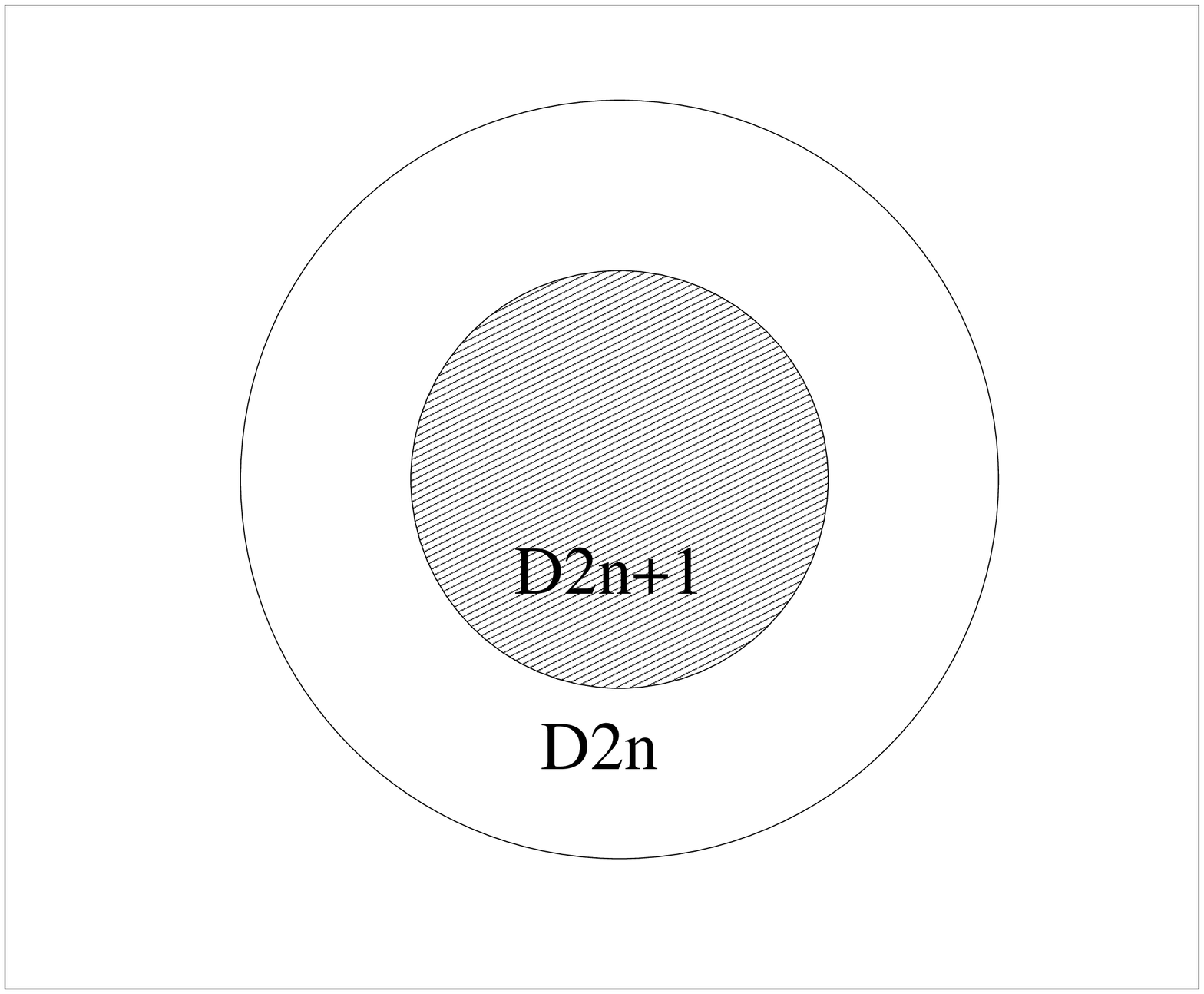}

\end{minipage}
\hspace{.002\linewidth}
\begin{minipage}{.32\linewidth}
\raggedright
 \includegraphics[width=\linewidth]{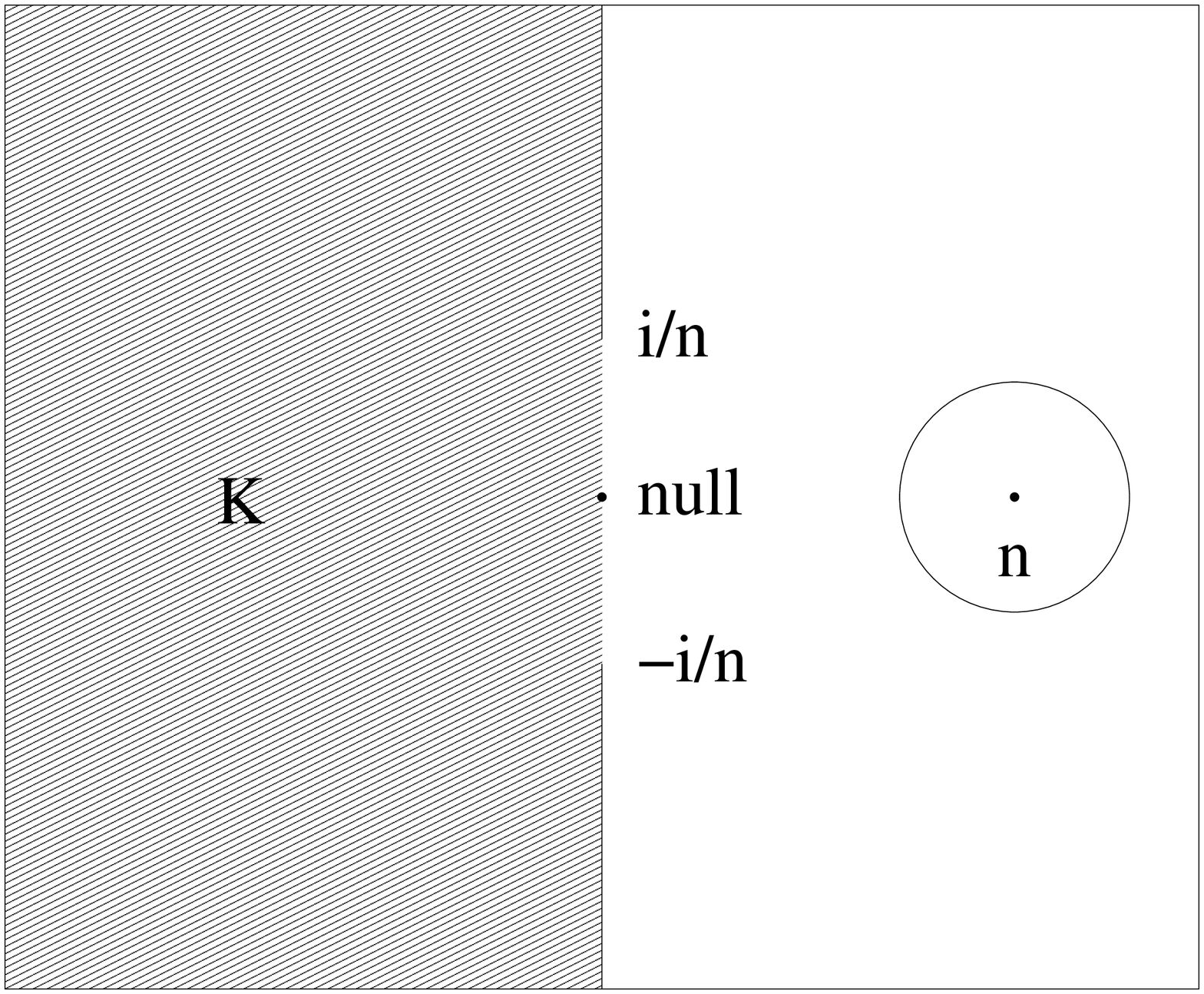}

\end{minipage}
\caption{\emph{left:} The interiors of the curves are simply-connected 
domains with three different kernels $K_1$, $K_2$ and $K_3$. 
\emph{middle:} The domains 
$D_n:=\{z: \vert z\vert <2-r_n\text{ where } n\equiv r_n\mod 2, r_n\in\{0,1\}\}$ 
have a unique kernel $D_1$  
but they do not converge to it. 
\emph{right:} The domains $D_n:=\C\backslash \left( \lbrace z= i y: \vert y\vert\geq 1/n\rbrace\cup\lbrace z: \vert z-n\vert \leq 1\rbrace\right)$ 
converge as kernels 
to the left half-plane $\mathbb{H}_{< 0}$
w.r.t. the marked point $-1$ 
but their complements do not converge to 
$\C\setminus \mathbb{H}_{< 0}$ in the Hausdorff metric.}
\label{figure_1}
\end{figure}

\subsection*{Dynamical approximation}
\label{subs_dyn_appr}

Let $\Hol(\C)$ be the space of all entire functions.
We denote the \emph{locally uniform distance} 
between $f,g\in \Hol(\C)$ by $\chi_{\luc}(f,g)$. 
The metric $\chi_{\luc}(f,g)$ 
induces the topology of locally uniform convergence on $\Hol(\C)$, 
so we say that $f_n$ converge to $f$ \emph{locally uniformly} if and only if $\chi_{\luc}(f_n,f)\rightarrow 0$ as $n\rightarrow\infty$. 
(For more details see e.g. \cite[Chapter 3]{milnor}.)
It follows from the Weierstra{\ss} Approximation Theorem 
\cite[Theorem 1.4]{milnor} that 
the space $\Hol(\C)$ is closed 
with respect to this topology.

For entire maps with a 
non-empty set of singular values we introduce a new metric 
which combines locally uniform convergence with controlled behaviour 
on the set of singular values. 
Hence this metric will be more convenient for the study of dynamics of entire functions.
Recall that $\Hol^{*}(\C)$ denotes the set of 
all entire functions which are not constant or linear.

\begin{dfnandprop}
\label{dyn_approx_def}
 The map $\chi_{\dyn}:\Hol^{*}(\C)\rightarrow [0,\infty)$ with 
\begin{eqnarray*}
 \chi_{\dyn}(f,g):=\chi_{\luc}(f,g) + d_H (S(f),S(g))
\end{eqnarray*}
is a metric, where $d_H (S(f),S(g))$ is measured with respect to the spherical metric. 

We will say that the sequence $f_n$ approximates $f$ \emph{dynamically} if $\chi_{\dyn}(f_n,f)\rightarrow 0$ as $n\rightarrow\infty$.
\end{dfnandprop}

Maps which are close in the metric $\chi_{\luc}$ do 
not necessarily have the property that their sets of 
singular values are close in the Hausdorff metric. 
This does not have to be true even in the case of a family 
of functions which depends holomorphically on some parameter $\lambda$, 
as the following example shows. 

\begin{exm}
\label{ex1}
Let $f_{\lambda}(z)=\e^{-\lambda z^2 + z-2}$ with $\lambda\in\C$. 
The map $f_0(z) = \e^{z-2}$ has $0$ as its only singular value. 
Now let $\lambda\neq 0$ be any complex number. 
Then apart from the asymptotic value $0$, the map 
$f_{\lambda}$ has the additional critical value $v_{\lambda}:=\e^{1/(4\lambda) -2}$. 
Clearly, $v_{\lambda}\to +\infty$ when $\lambda\searrow 0$.
\end{exm}

It is well-known that if $f_n$ is a sequence of entire maps 
such that $\chi_{\luc}(f_n,f)\rightarrow 0$, 
then for every $w\in S(f)$ there is a sequence 
$\{w_n: w_n\in S(f_n)\text{ for every }n\}$ 
such that $w_n\rightarrow w$ 
(see e.g. \cite[Theorem 2]{kisaka}), yielding 
lower semi-continuity for the sets of singular values 
in case of locally uniform convergence. 
Hence convergence in the metric $\chi_{\dyn}$ 
makes in particular sure that there are no sequences of singular values of the 
approximating maps $f_n$ that accumulate outside $S(f)$. 

Nonescaping-hyperbolicity is not an 
open property in the topology of locally uniform convergence. 
For instance, let $f_{\lambda}(z):=\e^{\lambda z}\cdot z^2$. Then
$f_0(z)=z^2$ is nonescaping-hyperbolic while for any sufficiently small 
$\lambda>0$, the critical value $4/(\e^2\lambda^2)$ escapes to 
$\infty$. 
However, the set of nonescaping-hyperbolic entire maps 
is open in the topology induced by the metric $\chi_{\dyn}$.

\begin{Theorem}[Nonescaping-hyperbolicity is an open property]
\label{hyp_open}
The set 
\begin{eqnarray*}
 \H:=\lbrace f\in \Hol^{*}(\C): f\text{ is nonescaping-hyperbolic}\rbrace
\end{eqnarray*}
is open in the topology induced by the metric $\chi_{\dyn}$.
\end{Theorem}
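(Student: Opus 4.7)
\emph{Plan.} Let $f \in \H$; the goal is to find $\varepsilon > 0$ so that every $g$ with $\chi_{\dyn}(f,g) < \varepsilon$ lies in $\H$. The strategy is to build a finite union of local trapping regions for the attracting cycles of $f$ which are simultaneously trapping regions for all sufficiently close perturbations $g$, and to show that these regions absorb $S(g)$ after a common number of iterations.

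\emph{Trapping regions and absorption time.} By Proposition~\ref{prop_hyp}, $\Attr(f) = \{O_1, \dots, O_k\}$ is finite and $S(f)$ is a bounded compact subset of $\A(f) = \bigcup_{j=1}^k A(O_j)$. Classical local theory near an attracting cycle (Koenigs, or B\"ottcher in the superattracting case) furnishes, for each $O_j$, an open neighbourhood $V_j$ of $O_j$ with $\overline{V_j} \Subset A(O_j)$ and $\overline{f(V_j)} \subset V_j$; set $V := \bigcup_j V_j$. Compactness of $S(f)$, together with the fact that every $w \in S(f)$ eventually enters $V$ under iteration of $f$, yields a uniform $N \in \N$ with $f^N(S(f)) \subset V$. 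By continuity of $f^N$, choose a bounded open set $W \supset S(f)$ with $\overline{W}$ compact and $f^N(\overline{W}) \subset V$.

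\emph{Transporting to the perturbation.} Fix $g$ with $\chi_{\dyn}(f,g) < \varepsilon$. Smallness of $\chi_{\luc}(f,g)$ gives uniform convergence $g^i \to f^i$ on $\overline{W}$ and on each $\overline{V_j}$, for every fixed $i$, as $\varepsilon \to 0$. Applying Hurwitz's theorem to $g^{p_j} - \id$ at each simple fixed point of $f^{p_j}$ in $O_j$ (simple because $|\mu(O_j)| < 1$) produces, for small $\varepsilon$, perturbed periodic cycles $O_j^g$ of the same period with multipliers close to those of $O_j$, hence still attracting; and uniform convergence on $\overline{V_j}$ forces $\overline{g(V_j)} \subset V_j$, so $V_j$ lies in the immediate basin of $O_j^g$ and thus $V_j \subset \A(g)$. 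Simultaneously, the spherical Hausdorff condition $d_H(S(f), S(g)) < \varepsilon$, combined with $S(f)$ having positive spherical distance from $\infty$ (being bounded in $\C$), forces $S(g) \subset W$ for $\varepsilon$ small; in particular $S(g)$ is bounded. Uniform convergence $g^N \to f^N$ on $\overline{W}$ then yields $g^N(S(g)) \subset V$, so $\bigcup_{n \geq N} g^n(S(g)) \subset V \subset \A(g)$, while the finitely many earlier iterates lie in the compact set $\bigcup_{i < N} g^i(\overline{W})$. Thus $P(g)$ is a bounded subset of $\A(g) \subset \F(g)$, and $g \in \H$ by Proposition~\ref{prop_hyp}.

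\emph{Main obstacle.} The one genuinely delicate point is the passage from the \emph{spherical} closeness of $S(g)$ and $S(f)$ to the Euclidean inclusion $S(g) \subset W$: without the standing hypothesis that $S(f)$ is bounded, $d_H(S(f), S(g))$ could stay small while $S(g)$ contained points wandering off toward $\infty$. The boundedness of $S(f)$ guarantees a positive spherical gap between $S(f)$ and $\infty$, which is precisely what is needed to upgrade spherical proximity to Euclidean inclusion. With that in hand, the remaining ingredients---Hurwitz persistence of attracting cycles, transport of the trapping property by uniform convergence on the $\overline{V_j}$, and control of the finitely many initial iterates on $\overline{W}$---are routine consequences of locally uniform convergence and the compactness of everything in sight.
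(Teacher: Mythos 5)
Your proof is correct and follows essentially the same route as the paper's: construct bounded forward-invariant trapping regions inside $\A(f)$ that absorb $S(f)$ after a uniform number of iterates, transport them to $\A(g)$ via locally uniform convergence, and use the spherical Hausdorff term together with boundedness of $S(f)$ to push $S(g)$ into the absorbed set. The only cosmetic differences are that the paper factors the trapping/persistence step into the standalone Lemma~\ref{K_lemma} (``compact $K\subset\A(f)$ implies $K\subset\A(g)$ for $g$ close in $\chi_{\luc}$''), applied directly to $K=\overline{U_\delta(S(f))}$, and obtains the perturbed attracting fixed point via a Montel/contraction argument where you invoke Hurwitz.
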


Note that $\H\subset\Hol^{*}_b(\C)$.
Theorem \ref{hyp_open} will follow from the following lemma.

\begin{Lemma}
\label{K_lemma}
Let $f\in\Hol^{*}(\C)$ and let $K\subset\A(f)$ be a compact set. 
Then $K\subset\A(g)$ for all $g\in\Hol^{*}(\C)$ 
that are sufficiently close to $f$ in the metric $\chi_{\luc}$.
\end{Lemma}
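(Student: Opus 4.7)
The plan is to exploit the fact that every attracting cycle of $f$ comes equipped with an arbitrarily small trapping neighbourhood, and that such trapping behaviour is robust under locally uniform perturbations.

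First I would organize the attracting data relevant to $K$. Each point $z\in K$ lies in the basin of some attracting cycle $O_{z}$ of $f$. Since the basins $A(O)$ are pairwise disjoint open sets and $K$ is compact, only finitely many cycles $O_{1},\dots,O_{m}$ have basins that meet $K$. For each such cycle $O_{j}=\{w_{j,1},\dots,w_{j,p_{j}}\}$ with multiplier $|\mu_{j}|<1$, one can construct a trapping neighbourhood $V_{j}=D_{j,1}\sqcup\dots\sqcup D_{j,p_{j}}$, where each $D_{j,\ell}$ is a small round disk about $w_{j,\ell}$ chosen so that $f(\overline{D_{j,\ell}})\Subset D_{j,\ell+1}$ (indices mod $p_{j}$); in the non-superattracting case this follows from the Koenigs linearization, in the superattracting case from the Böttcher normal form (or directly by taking $D_{j,\ell}$ small enough that $|f^{p_{j}}(z)-w_{j,\ell}|\le r|z-w_{j,\ell}|$ for some $r<1$). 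In particular $f^{p_{j}}(\overline{V_{j}})\Subset V_{j}$.

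Next I would reduce the assertion on $K$ to a finite covering. Given $z\in K$, there exists $n(z)\in\N$ with $f^{n(z)}(z)\in V_{j(z)}$. By continuity of $f^{n(z)}$, choose an open neighbourhood $U_{z}\ni z$ with $\overline{U_{z}}$ compact and $f^{n(z)}(\overline{U_{z}})\subset V_{j(z)}$. By compactness, finitely many $U_{z_{1}},\dots,U_{z_{N}}$ cover $K$; set $W:=\bigcup_{i=1}^{N}\overline{U_{z_{i}}}$ (a compact set containing $K$) and $V:=\bigcup_{j=1}^{m}\overline{V_{j}}$.

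Now comes the perturbation step. A standard induction shows that if $g\to f$ locally uniformly then $g^{k}\to f^{k}$ locally uniformly for every fixed $k$. I would apply this with the finitely many exponents $p_{1},\dots,p_{m}$ and $n(z_{1}),\dots,n(z_{N})$ on the compact sets $V$ and $W$ respectively, to obtain $\eps>0$ such that whenever $\chi_{\luc}(g,f)<\eps$:
\begin{itemize}
\item $g^{p_{j}}(\overline{V_{j}})\Subset V_{j}$ for every $j$, so by the Schwarz--Pick lemma applied to each hyperbolic disk $D_{j,\ell}$, the map $g^{p_{j}}$ is a strict hyperbolic contraction on each component and therefore possesses a unique attracting fixed point in $D_{j,\ell}$ to which every point of $D_{j,\ell}$ converges; hence $V_{j}\subset\A(g)$;
\item $g^{n(z_{i})}(\overline{U_{z_{i}}})\subset V_{j(z_{i})}$ for every $i$, so $\overline{U_{z_{i}}}\subset\A(g)$ for each $i$, and consequently $K\subset W\subset\A(g)$.
\end{itemize}

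The only real obstacle is producing trapping neighbourhoods $V_{j}$ that are robust against perturbation; in the superattracting case this is immediate, while in the merely attracting case one must make sure the contraction factor $r<1$ on $V_{j}$ is strict (i.e.\ $f^{p_{j}}(\overline{V_{j}})\Subset V_{j}$, not just $\subset$), so that a small $L^{\infty}$-perturbation of $f^{p_{j}}$ on the compact set $\overline{V_{j}}$ still lands inside $V_{j}$. Once this is secured, the rest is a routine combination of compactness, continuity of finite iterates, and the Schwarz--Pick lemma.
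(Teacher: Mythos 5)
Your proof is correct and follows essentially the same route as the paper: reduce to finitely many attracting cycles covering $K$, build a trapping neighbourhood $U$ with $f(\overline{U})\Subset U$, observe this survives a locally uniform perturbation, deduce that $g$ has an attracting fixed point absorbing $U$, and then pull $K$ forward by a finite number of iterates into $U$. The only cosmetic differences are that you make the reduction to a single cycle explicit via the trapping sets $V_j=\bigsqcup D_{j,\ell}$ rather than silently replacing $f$ by an iterate, and you invoke the Schwarz--Pick contraction where the paper cites Montel together with the Contraction Mapping Theorem --- two phrasings of the same hyperbolic-metric argument.
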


\begin{proof}
The components of $\A(f)$ form an open cover of the 
compact set $K$, hence there is a finite subcover. 
We can assume w.l.o.g. that $K$ is contained in the 
basin of attraction $A(z_0)$ of a fixed point $z_0\in\C$ of $f$, 
since otherwise we can repeat the argument for 
every attracting periodic point of $f$. 
There exists a bounded open set $U\ni z_0$ such that $f(U)\Subset U$. 
By definition of $\chi_{\luc}$, $g(U)\Subset U$ 
holds for all entire maps $g$ for which
$\chi_{\luc}(f,g)$ is sufficiently small. 
By Montel's Theorem, $\lbrace g^k\rbrace_{k\in\N}$ is normal in $U$ 
and by the Contraction Mapping Theorem, $g$ has a fixed point in $U$ 
which is necessarily attracting. 
Since $U$ is bounded, it follows that $U\subset\A(g)$. 

There exists $N\in\N$ such that $f^N (K)\subset U$, so again, 
by locally uniform convergence, $g^N (K)\subset U\subset\A(g)$ 
if $\chi_{\luc}(f,g)$ sufficiently small. 
The claim now follows from the complete invariance of $\A(g)$ 
under the map $g$.
\end{proof}

\begin{proof}[Proof of Theorem \ref{hyp_open}]
Let $f$ be nonescaping-hyperbolic, 
hence $S(f)$ is bounded and contained in $\A(f)$. 
Choose $\delta>0$ sufficiently small such that 
$K:=\overline{U_{\delta}(S(f))}\subset\A(f)$. 
Since $K$ is compact, it follows from Lemma \ref{K_lemma} 
that there is a constant $\varepsilon>0$ 
such that $K\subset\A(g)$ for all $g$ with $\chi_{\luc}(f,g)<2\varepsilon$. 
Now choose $\eta=\min\lbrace\varepsilon,\delta\rbrace$. 
Then for all maps $g$ with $\chi_{\dyn}(f,g)<\eta$ we obtain
\begin{eqnarray*}
S(g)\subset U_{\delta}(S(f))\Subset\A(g),
\end{eqnarray*}
hence $g$ is nonescaping-hyperbolic. 
\end{proof}

\begin{Remark}
We have shown that 
nonescaping-hyperbolicity is not an open property in the topology
of locally uniform convergence. 
The given example also shows that --- in the same topology --- 
the set of all hyperbolic maps in $\Hol^{*}(\C)$ is not open either.
Now let $p$ be a hyperbolic polynomial for which a finite singular
value escapes to infinity.
 It is plausible that for 
any sufficiently small $\varepsilon$, the neighbourhood 
$U_{\varepsilon}(p)=\{f\in\Hol^{*}(\C):\chi_{\dyn}(f,p)<\varepsilon\}$ of $p$  
contains transcendental entire functions with
the same property, i.e., maps for which some singular value escapes.
Hence it is likely that hyperbolicity is not open in the topology
induced by the metric $\chi_{\dyn}$ either.
\end{Remark}

\section{Stability of nonescaping-hyperbolic parameters}
\label{section_main}
Recall that our goal is to prove that under certain conditions,
a kernel of a sequence of nonescaping-hyperbolic components equals a 
nonescaping-hyperbolic component of the limit family.
As we will see, it is not hard to show that 
every such component of 
$\Fli$ is contained in a kernel of a sequence of 
nonescaping-hyperbolic components of $\Fln$. 
This statement requires even less restrictions than 
those stated in Theorem \ref{thm_main}.
For the other inclusion to hold, 
we have to construct a more sensible setup.

Denote by $\widehat{\N}:=\N\cup\lbrace\infty\rbrace$ the one-point 
compactification of the set of natural numbers.
The metric $\chi_{\widehat{\N}}$ on $\widehat{\N}$ defined by 
$\chi_{\widehat{\N}}(m,n):= 2^{-\min (m, n) -1}$ when $m\neq n$ 
(and equal zero otherwise)
makes $\widehat{\N}$ a complete metric space. 
From now on, we assume that $M$ 
is a complex manifold with a metric $\chi_M$ and 
define $M^{'}:=\widehat{\N}\times M$. 
The relation $\chi_{M^{'}} ((m,\lambda),(n,\nu))
:=\chi_{\widehat{\N}}(m,n) + \chi_{M^{'}} (\lambda,\nu)$ then 
defines a metric $\chi_{M^{'}}$ on $M^{'}$. 

For every $n\in\widehat{\N}$ let 
$\Fln=\lbrace f_{n,\lambda}:\C\rightarrow\C,\;
\lambda\in M\rbrace\subset\Hol^{*}_b(\C)$ 
be a family of functions parametrized by $M$.
To simplify the notations, 
we will skip the first index entry for maps 
in $\Fli$, i.e., we will write $f_{\lambda}$ 
for $f_{\infty,\lambda}\in\Fli$.

We want all families $\Fln, n\in\widehat{\N}$, to satisfy the following. 
\begin{dsa}['dsa']
The map 
\begin{eqnarray*}
 F:M^{'}\rightarrow\Hol^{*}_b(\C),\; (n,\lambda)\mapsto f_{n,\lambda}
\end{eqnarray*}

is continuous with respect to the metrics $\chi_{M^{'}}$ and $\chi_{\dyn}$. 
\end{dsa}

The key-feature of 'dsa' is the local uniformity in $\lambda$ \emph{and} $n$: 
Let $f_{\lambda_0}\in\Fli$ and let $K\subset\C$ be a compact set.
Then for every $\varepsilon>0$ there exist $\delta>0$ and $n_0\in\N$
such that $\vert f_{n,\lambda}(z)-f_{\lambda_0}(z)\vert<\varepsilon$
for all $z\in K$, $\lambda\in U_{\delta}(\lambda_0)$ and $n\geq n_0$.

\begin{notations}
For every $n\in\widehat{\N}$ we denote by
\begin{align*}
 \H(\Fln):=\{ \lambda\in M: f_{n,\lambda}\in\Fln \text{ is nonescaping-hyperbolic}\}
\end{align*}
the parameters corresponding to nonescaping-hyperbolic
maps in the respective family.
We will usually denote a component of $\H(\Fln)$ by $H_n$, 
a component of $\H(\Fli)$ by $H_{\infty}$ 
and a kernel of a sequence $H_n$ by $\widetilde{H}$.
\end{notations}

\begin{Proposition}
Let $H_{\infty}$ be a component of $\H(\Fli)$. 
Then there exists a kernel $\widetilde{H}$ of a sequence of
components of $\H(\Fln)$ such that $H_{\infty}\subset\widetilde{H}$.
\end{Proposition}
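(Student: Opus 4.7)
The plan is to pick a basepoint $\lambda_0\in H_\infty$, define $H_n$ as the component of $\H(\Fln)$ containing $\lambda_0$ (for large $n$), and show that $H_\infty$ itself satisfies the defining property of the kernel of $(H_n)$ with respect to $\lambda_0$; maximality of the kernel then gives $H_\infty\subset\widetilde{H}$.

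First, I would check that $\lambda_0\in\H(\Fln)$ for all sufficiently large $n$. Since $f_{\lambda_0}$ is nonescaping-hyperbolic, Theorem \ref{hyp_open} gives $\varepsilon>0$ such that every $g\in\Hol^{*}(\C)$ with $\chi_{\dyn}(g,f_{\lambda_0})<\varepsilon$ is nonescaping-hyperbolic. The dynamical standing assumption asserts continuity of $F$ at $(\infty,\lambda_0)$, so $\chi_{\dyn}(f_{n,\lambda_0},f_{\lambda_0})<\varepsilon$ for all $n\ge n_0$ with some $n_0$. Define $H_n$ to be the component of $\H(\Fln)$ containing $\lambda_0$ for $n\ge n_0$, and pick $H_n$ arbitrarily for $n<n_0$.

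Next, I would verify the kernel property for $H_\infty$. Note that $H_\infty$ is open (as a component of the $\chi_{\dyn}$-open set $\H(\Fli)$, pulled back by the continuous map $\lambda\mapsto f_{\lambda}$) and connected, and since $M$ is a manifold it is in fact path-connected. Given any compact $K\subset H_\infty$, after joining $K$ to $\lambda_0$ by a path in $H_\infty$ we may enlarge $K$ to a compact connected set $K'\subset H_\infty$ with $\lambda_0\in K'$; it suffices to prove $K'\subset H_n$ eventually. For each $\mu\in K'$ apply Theorem \ref{hyp_open} to obtain $\varepsilon_\mu>0$ with $U_{\varepsilon_\mu}^{\chi_{\dyn}}(f_{\mu})\subset\H$, then use the uniformity in both variables built into 'dsa' to find a $\chi_M$-neighborhood $V_\mu\ni\mu$ and an integer $n_\mu$ such that $f_{n,\nu}\in U_{\varepsilon_\mu}^{\chi_{\dyn}}(f_\mu)$ whenever $\nu\in V_\mu$ and $n\ge n_\mu$. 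Compactness of $K'$ yields finitely many $V_{\mu_1},\dots,V_{\mu_k}$ covering $K'$, and setting $n_1:=\max\{n_0,n_{\mu_1},\dots,n_{\mu_k}\}$ gives $K'\subset\H(\Fln)$ for all $n\ge n_1$. Since $K'$ is connected and contains $\lambda_0$, it is contained in the component $H_n$ of $\H(\Fln)$ through $\lambda_0$. By definition of kernel, $H_\infty\subset\widetilde{H}$.

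The only delicate point is ensuring that the single $n_\mu$ works \emph{simultaneously} for all parameters in a neighborhood of $\mu$; this is precisely the local uniformity in $n$ and $\lambda$ emphasized after 'dsa', so no genuine obstacle arises. Everything else is a routine compactness-plus-continuity argument, with connectedness of $K'$ invoked to upgrade ``$K'\subset\H(\Fln)$'' to ``$K'$ lies in one component of $\H(\Fln)$.''
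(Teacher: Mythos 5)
Your proof is correct and follows the same basic idea as the paper: apply Theorem \ref{hyp_open} together with the joint $(n,\lambda)$-uniformity coming from the dynamical standing assumption. The paper's own proof is a one-liner, asserting only that each $\lambda_0\in H_{\infty}$ has a neighbourhood contained in $\H(\Fln)$ for all large $n$, and leaves the rest to the reader; you have filled in the part the paper suppresses, namely the compactness/covering argument and the enlargement of $K$ to a connected compact set $K'\ni\lambda_0$ so that the eventual inclusion $K'\subset\H(\Fln)$ actually lands in the component $H_n$ through $\lambda_0$. That last connectedness step is genuinely needed for the kernel conclusion and is correctly handled.
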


\begin{proof}
Let $\lambda_0\in\H(\Fli)$. 
It follows from Theorem \ref{hyp_open} and the dynamical standing assumption 
that there exists a neighbourhood $U(\lambda_0)\subset M$ 
such that $U(\lambda_0)\subset\H(\Fln)$ for all sufficiently large $n\in\N$.
\end{proof}

To prove the opposite inclusion, we have to make additional restrictions.
Our requirements, that we will assume from now on, 
are formalized in the following way.
\begin{asa}['hsa'] 
$\;$
\begin{itemize}
\item[$(i)$] For every $n\in\widehat{\N}$, 
the maps $f_{n,\lambda}$ depend holomorphically on $\lambda\in M$.
\item[$(ii)$] Let  $n\in\N$ and $\lambda_0\in M$. 
Then the singular values of $f_{n,\lambda_0}$ are 
holomorphically parametrized by $M$, 
i.e., for each singular value $s$ of $f_{n,\lambda_0}$ 
there exists a holomorphic map $w_n:M\rightarrow\C, \lambda\mapsto w_n(\lambda)$ 
such that $w_n(\lambda_0)=s$ and $w_n(\lambda)\in S(f_{n,\lambda})$.
\end{itemize}
\end{asa}
Note that the second condition of 'hsa' does not imply that 
a parametrization of $S(f_{n,\lambda_0})$ for some $\lambda_0$ needs to be 
an exhaustion of the set of singular values for another parameter 
$\lambda\neq\lambda_0$. A priori, it is possible that 
$S(f_{n,\lambda_0})=\lbrace w_n^{i}(\lambda_0)\rbrace_{i\in I}$ 
for some index set $I$ but 
$\lbrace w_n^{i}(\lambda)\rbrace_{i\in I}\subsetneq S(f_{n,\lambda})$ 
for some $\lambda\neq\lambda_0$. 

Also note that we do not assume condition $(ii)$ 
to be satisfied by the maps in $\Fli$. 
The reason is that we only need a \emph{local} 
holomorphic parametrization of the sets of singular values 
for maps in $\Fli$ and as the next statement shows, this
follows from the assumptions we already made.

\begin{Theorem}
\label{normality}
Let $\widetilde{H}$ be a kernel of a sequence of 
nonescaping-hyperbolic components $H_n$, $U\ni\lambda_0$ a simply-connected 
neighbourhood of $\lambda_0$ with $U\Subset\widetilde{H}$, and 
let $s$ be a singular value of $f_{\lambda_0}$.

Then there exists a holomorphic map 
$w:U\rightarrow\C$ such that 
$w(\lambda)\in S(f_{\lambda})$, $w(\lambda_0)=s$ and 
the family $\lbrace f_{\lambda}^n (w(\lambda))\rbrace_{n\in\N}$ 
is normal in $U$.
\end{Theorem}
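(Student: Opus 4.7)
The plan is to realise $w$ as a subsequential limit of the canonical holomorphic parametrizations supplied by 'hsa'(ii) in the approximating families, and to deduce normality of the orbit via Montel's theorem, with avoidance functions coming from repelling periodic points that move holomorphically over each $H_n$.

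First I would construct $w$. By $\chi_{\dyn}$-continuity of $F$ we have $d_H(S(f_{n,\lambda_0}),S(f_{\lambda_0}))\to 0$, so one can pick $s_n\in S(f_{n,\lambda_0})$ with $s_n\to s$; 'hsa'(ii) then extends each $s_n$ to a holomorphic map $w_n\colon M\to\C$ with $w_n(\lambda_0)=s_n$ and $w_n(\lambda)\in S(f_{n,\lambda})$. By 'dsa' and the compactness of $\overline{U}$, the map $\lambda\mapsto S(f_\lambda)$ is Hausdorff-continuous on $\overline{U}$, and the convergence $f_{n,\lambda}\to f_\lambda$ in $\chi_{\dyn}$ is uniform in $\lambda\in\overline{U}$; this forces $\bigcup_{n\ge n_0,\,\lambda\in\overline{U}}S(f_{n,\lambda})$ to lie in a bounded subset of $\C$, so $\{w_n|_U\}$ is locally uniformly bounded, hence normal. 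Extract a subsequence with $w_n\to w$ locally uniformly on $U$: then $w$ is holomorphic, $w(\lambda_0)=s$, and the Hausdorff convergence together with closedness of $S(f_\lambda)$ gives $w(\lambda)\in S(f_\lambda)$.

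For normality of $\{f_\lambda^k(w(\lambda))\}_{k\in\N}$, I would exploit the hyperbolicity of the approximants: for all $n\ge n_0$ we have $\overline{U}\subset H_n$, so $f_{n,\lambda}$ is nonescaping-hyperbolic on $\overline{U}$ and $w_n(\lambda)\in\A(f_{n,\lambda})\subset\F(f_{n,\lambda})$ by Proposition \ref{prop_hyp}. Pick three distinct repelling periodic points $\alpha,\beta,\gamma$ of $f_{\lambda_0}$ of a common period $m$, none of which lies in the forward orbit $\{f_{\lambda_0}^k(s):k\in\N\}$; this is possible because for $m$ large the set of period-$m$ repelling periodic points is countably infinite while the forbidden orbit is contained in a single grand orbit. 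By Hurwitz and $\chi_{\luc}$-convergence, for $n$ large there are repelling period-$m$ points $\alpha_n,\beta_n,\gamma_n$ of $f_{n,\lambda_0}$ converging to $\alpha,\beta,\gamma$. Since $H_n$ is hyperbolic, period-$m$ points have multipliers $\neq 1$ and cannot collide, so the implicit function theorem extends these to holomorphic functions $\alpha_n,\beta_n,\gamma_n\colon H_n\to\C$, each a repelling periodic point of $f_{n,\lambda}$ lying in $\J(f_{n,\lambda})$. A further diagonal extraction produces locally uniform limits $\alpha^*,\beta^*,\gamma^*\colon U\to\C$ that are periodic points of $f_\lambda$ of period dividing $m$; Hurwitz applied to $\alpha_n-\beta_n$ (nowhere zero on $H_n$, nonzero at $\lambda_0$ in the limit) shows $\alpha^*,\beta^*,\gamma^*$ are pairwise unequal at every $\lambda\in U$.

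The hard part is promoting the strict avoidance $f_{n,\lambda}^k(w_n(\lambda))\neq\alpha_n(\lambda)$, which holds automatically at finite level because the Fatou and Julia sets are disjoint, to the limit. For each $k$ the function $f_{n,\lambda}^k(w_n(\lambda))-\alpha_n(\lambda)$ is nowhere vanishing on $U$, so by Hurwitz the limit $f_\lambda^k(w(\lambda))-\alpha^*(\lambda)$ is either identically zero or nowhere zero on $U$. Identical vanishing for some $k$ would, upon evaluation at $\lambda_0$, force $f_{\lambda_0}^k(s)=\alpha$, contradicting the choice of $\alpha$ outside the orbit of $s$; hence nonvanishing persists for every $k$, and likewise for $\beta^*$ and $\gamma^*$. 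Montel's theorem, applied with these three pointwise pairwise-distinct holomorphic avoidance functions (whose pairwise spherical distances are bounded below on each compact subset of $U$), yields normality of $\{f_\lambda^k(w(\lambda))\}_k$ on $U$, completing the proof.
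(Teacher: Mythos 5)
Your proposal is essentially correct and closely parallels the paper's argument: pull back the singular-value parametrizations $w_n$ via `hsa', extract a holomorphic limit $w$ by normality (you use uniform boundedness of $\bigcup S(f_{n,\lambda})$ over $\overline U\times\{n\ge n_0\}$, the paper instead normalizes two repelling orbits to $0$ and $1$ and invokes Montel; both work), and then apply Montel with moving targets coming from repelling periodic points continued holomorphically over $\overline U\subset H_n$. The one substantive variation is your treatment of the degenerate case $f_{\lambda}^k(w(\lambda))\equiv\alpha^*(\lambda)$: the paper allows it and observes that once an orbit function is identically a periodic point, all subsequent ones cycle through a finite set, which is trivially normal; you instead \emph{preempt} it by selecting $\alpha,\beta,\gamma$ outside the forward orbit of $s$, so that identical vanishing is impossible once evaluated at $\lambda_0$. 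That is a legitimate and slightly cleaner alternative.

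Two small points of precision. First, the justification ``for $m$ large the set of period-$m$ repelling periodic points is countably infinite'' is false when $f_{\lambda_0}$ is a polynomial (there are at most $\deg(f)^m$ such points); the correct and simpler reason is that the forward orbit of $s$ meets at most one periodic cycle (once it hits a periodic point it stays on that cycle), while $f_{\lambda_0}$ has infinitely many repelling cycles in total, so one can always pick three repelling points off that single cycle. Second, when you write that the implicit function theorem extends $\alpha_n,\beta_n,\gamma_n$ to holomorphic functions $H_n\to\C$, you should restrict to the simply connected domain $U$, since $H_n$ need not be simply connected and the continuation a priori has monodromy; this is all you need. Your reliance on the continued points remaining in $\J(f_{n,\lambda})$ throughout $U$ (so that $f_{n,\lambda}^k(w_n(\lambda))\neq\alpha_n(\lambda)$) mirrors the paper's own use of this fact and is justified by the same observation that parabolic cycles are excluded in a nonescaping-hyperbolic component.
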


\begin{proof}
Let $f_{\lambda_0}\in\Fli$. By \cite[Theorem 5]{bergweiler_1} 
and \cite[Theorem 13.1]{milnor}, the map $f_{\lambda_0}$ has infinitely
many repelling periodic points (in its Julia set). Let us pick
two such points and denote them by $p(\lambda_0)$ and $q(\lambda_0)$;
let $n_1$ and $n_2$ be their periods.
If $D$ is a disk at $p(\lambda_0)$ such that $p(\lambda_0)$ is the only
periodic point of $f_{\lambda_0}$ of period $\leq n_1$ in $\cl{D}$, 
then it follows from 'dsa' and Rouch\'{e}'s theorem 
that there is a neighbourhood $U(\lambda_0)$ 
of $\lambda_0$ and an integer $n_0\geq 0$ such that for every $\lambda\in U(\lambda_0)$
and every $n\geq n_0$, the map $f_{n,\lambda}$ has exactly one 
periodic point $p_n(\lambda)$ of period $n_1$ in $D$ and no
other periodic point of period $\leq n_1$ in $\cl{D}$.
By the Cauchy Integral Formula (after decreasing the initial disk $D$, if necessary),
every such periodic point $p_n(\lambda)$ must be repelling. 
By the Implicit Function Theorem, every of these points can be 
analytically continued as a repelling periodic point of period $n_1$ 
in a sufficiently small neighbourhood. The previous
observation then implies that for every sufficiently large $n\in\widehat{\N}$, 
there exists an analytic function $p_n: U(\lambda_0)\to \C$ such that 
$p_n(\lambda)\in D$ is a repelling periodic point of $f_{n,\lambda}$ of 
period $n_1$. By construction, $p_n(\lambda)\to p(\lambda)$ when $n\to\infty$.
We can repeat the same procedure for $q(\lambda_0)$ and obtain holomorphic maps 
$q_n: U{'}(\lambda_0)\to\C$. 

Let us now assume that $\lambda_0\in\wt{H}$. 
Since $\wt{H}$ is open, there is a disk $B\subset\wt{H}$ 
centred at $\lambda_0$ such that the maps $p_n, q_n$ are defined 
and holomorphic in $B$ and their images are repelling periodic points
of the corresponding maps (and periods).
Let $U\supset B$ be a simply-connected bounded domain with closure
in $\wt{H}$. Since $\wt{H}$ is a kernel of a sequence $H_n$ of 
nonescaping-hyperbolic components, the compact set
$\cl{U}$ is contained in all $H_n$ for $n\in\N$ chosen sufficiently large.
Hence, the maps $p_n$ and $q_n$ ($n\in\N$) can be holomorphically continued 
to all of $U$, since otherwise the Implicit Function Theorem would
imply that for some $\tilde{\lambda}\in U$, the map $f_{n,\tilde{\lambda}}$
has an indifferent periodic point.

Let us now consider the maps 
$\Phi_{\lambda}(z)=\frac{z-p(\lambda)}{q(\lambda)-p(\lambda)}$ and $\Phi_{n,\lambda}(z)=\frac{z-p_n(\lambda)}{q_n(\lambda)-p_n(\lambda)}$. 
Conjugating $f_{\lambda}$ with $\Phi_{\lambda}$ and 
$f_{n,\lambda}$ with $\Phi_{n,\lambda}$, 
we obtain conformal conjugates such that the points $0$ and $1$ 
correspond to our previously considered repelling periodic points. 
Hence we can assume that $p(\lambda)=0$, $q(\lambda)=1$ for all 
$\lambda\in B$ and $p_n(\lambda)=0$, $q_n(\lambda)=1$ 
for all $\lambda\in U$. In particular, 
\begin{eqnarray*}
\lbrace 0,1\rbrace\subset\J(f_{\lambda}),\J(f_{n,\lambda})\text{ and } S(f_{n,\lambda})\subset\C\backslash\lbrace 0,1\rbrace.
\end{eqnarray*}
for all sufficiently large integers $n\in\widehat{\N}$ 
and the corresponding values of $\lambda$. 

Let $s$ be a singular value of $f_{\lambda_0}$. 
By 'dsa', there is a sequence of singular values $s_n$ 
of the maps $f_{n,\lambda_0}$ that converges to $s$. 
Due to 'hsa', there are holomorphic maps $w_n$ such that 
$w_n(\lambda_0)=s_n$ and $w_n(\lambda)\in S(f_{n,\lambda})$ 
for all $\lambda\in M$. By the previous argument, 
we have that $w_n(U)\subset\C\backslash\lbrace 0,1\rbrace$, 
hence, by Montel's theorem, $\lbrace w_n\rbrace_{n\in\N}$ is a normal 
family on $U$. Let $(w_{n_k})$ be a convergent subsequence of $(w_n)$, 
and let $w$ be a limit function which is necessarily holomorphic. 
By construction we have that $w(\lambda_0)=s$, and 'dsa' implies that 
$w(\lambda)\in S(f_{\lambda})$ holds for all $\lambda\in U$.  
Hence $w$ is a holomorphic parametrization of the singular value $s$ on $U$. 

Consider now for a fixed $\nu$ the family 
$\lbrace f^{\nu}_{n,\lambda}(w_n(\lambda))\rbrace_{n\in\N}$ 
with $\lambda\in U$. Since the Fatou set of an entire map is completely invariant, $w_n(\lambda)\in\F(f_{n,\lambda})$ implies that  $f^{\nu}_{n,\lambda}(w_n(\lambda))
\subset\F(f_{n,\lambda})\subset\C\backslash\lbrace 0,1\rbrace$.  
Applying Montel's theorem it follows that for each $\nu$ the above family is normal in $U$.
For simplicity, denote its limit by $S_{\nu}:U\rightarrow\widehat{\C}$, $\lambda\mapsto S_{\nu}(\lambda):=\lim_{n\rightarrow\infty} f_{n,\lambda} ^{\nu} (w_n(\lambda))$. 
It follows from the local uniform convergence of the maps $f_{n}$ and $w_n$ that 

\begin{eqnarray*}
S_{\nu}(\lambda)=\lim_{n\rightarrow\infty} \lim_{m\rightarrow\infty} f^{\nu}_{n,\lambda} (w_m(\lambda)) = \lim_{n\rightarrow\infty} f^{\nu}_{n,\lambda}(w(\lambda))= f^{\nu}_{\lambda}(w(\lambda)). 
\end{eqnarray*}

By Hurwitz's theorem, either 
$S_{\nu}\subset\C\backslash\lbrace 0,1\rbrace$ or $S_{\nu}\equiv 0$ or $1$. 
Recall that $0$ and $1$ are periodic points of $f_{\lambda}$, 
hence if $S_{\nu}\equiv 0$ (resp. $1$) then there exists some 
$m\in\N$ such that $S_{\nu +km}\equiv 0$ 
(resp. $1$) for all $k\in\N$. 
Applying Montel's theorem once more, 
we obtain that $\lbrace S_{\nu}\rbrace_{\nu\in\N}$ is a normal family on $U$.
\end{proof}

We can now prove the remaining statement which will
then imply Theorem \ref{thm_main}.

\begin{Theorem}
\label{maintheorem}
Let $\widetilde{H}$ be a kernel of a sequence of 
components of $\H(\Fln)$. 
If there is a component $H_{\infty}$ of $\H(\Fli)$ such that 
$\widetilde{H}\cap H_{\infty}\neq\emptyset$, 
then $\widetilde{H}\subset H_{\infty}$.
\end{Theorem}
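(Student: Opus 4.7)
My plan is to exploit the connectedness of the kernel $\widetilde{H}$ to reduce the inclusion $\widetilde{H}\subset H_{\infty}$ to showing that $\widetilde{H}\cap H_{\infty}$ is both open and closed in $\widetilde{H}$. Since $H_{\infty}$ is a component of the open set $\H(\Fli)$, distinct components of $\H(\Fli)$ have pairwise disjoint closures; hence once $\widetilde{H}\cap H_{\infty}$ is seen to be open, closed and nonempty in the connected set $\widetilde{H}$, it must equal $\widetilde{H}$.

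Openness is direct: for $\lambda_0\in\widetilde{H}\cap H_{\infty}$, Theorem \ref{hyp_open} supplies $\eta>0$ such that every $g$ with $\chi_{\dyn}(f_{\lambda_0},g)<\eta$ is nonescaping-hyperbolic, and 'dsa'-continuity of $F$ transfers this to a neighbourhood of $\lambda_0$ in $M$ lying in $\H(\Fli)$, hence in the single component $H_{\infty}$. For closedness, suppose $\lambda_n\to\lambda_{*}\in\widetilde{H}$ with $\lambda_n\in H_{\infty}$; then $\lambda_*\in\cl{H_{\infty}}$, and since distinct components of $\H(\Fli)$ have disjoint closures, it is enough to show that $f_{\lambda_*}$ is itself nonescaping-hyperbolic. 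As $S(f_{\lambda_*})$ is bounded (because $f_{\lambda_*}\in\Hol^{*}_b(\C)$), Proposition \ref{prop_hyp} reduces this to showing that every singular value $s$ of $f_{\lambda_*}$ belongs to $\A(f_{\lambda_*})$.

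Fix such an $s$ and choose a simply-connected $U\Subset\widetilde{H}$ containing $\lambda_*$ and almost all $\lambda_n$. Theorem \ref{normality} provides a holomorphic $w:U\to\C$ with $w(\lambda_*)=s$, $w(\lambda)\in S(f_{\lambda})$, and the orbit family $\{S_{\nu}(\lambda):=f_{\lambda}^{\nu}(w(\lambda))\}_{\nu\in\N}$ normal on $U$. Since each $\lambda_n$ lies in the fixed component $H_{\infty}$, the attracting cycle of $f_{\lambda_n}$ whose basin contains $w(\lambda_n)$ has a period $k$ independent of $n$, and the Implicit Function Theorem assembles these cycle points into a holomorphic branch $a$ of periodic points defined on the component of $H_{\infty}\cap U$ that accumulates at $\lambda_*$. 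Passing to a subsequence of $\nu$ with fixed residue $c\pmod{k}$, I extract a locally uniform limit $S_{\nu_j}\to\sigma$ on $U$; at every $\lambda_n$ we have $\sigma(\lambda_n)=a(\lambda_n)$, so the identity principle shows that $\sigma$ extends $a$ holomorphically to all of $U$ and satisfies $f^{k}_{\lambda}(\sigma(\lambda))=\sigma(\lambda)$ there.

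The main obstacle is then to show that the multiplier $\mu(\lambda):=(f^{k}_{\lambda})'(\sigma(\lambda))$ satisfies $|\mu(\lambda_*)|<1$. To this end I would run the analogous construction inside each $\Fln$: for $n$ large, $U\Subset H_n$ and a matching-period argument (small perturbations of the attracting cycle of $f_{\lambda_n}$ containing $w(\lambda_n)$) supplies a holomorphic branch $a_n:U\to\C$ of attracting periodic points of $f_{n,\lambda}$ of period $k$, with multipliers $\mu_n$ obeying $|\mu_n|<1$ throughout $U$. Local-uniform convergence $f_{n,\lambda}\to f_{\lambda}$ together with the Weierstra{\ss} convergence theorem for derivatives then yields $\mu_n\to\mu$ locally uniformly, so $|\mu|\leq 1$ on $U$. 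Because $|\mu(\lambda_n)|<1$ strictly, the maximum-modulus principle precludes $|\mu(\lambda_*)|=1$: otherwise $\mu$ would be a constant of modulus one, contradicting the strict inequality along the sequence $\lambda_n$. Hence $a(\lambda_*)$ lies on an attracting cycle of $f_{\lambda_*}$, and since $f_{\lambda_*}^{\nu_j}(s)\to a(\lambda_*)$ the orbit of $s$ eventually enters the open basin of this cycle, giving $s\in\A(f_{\lambda_*})$ and completing the reduction.
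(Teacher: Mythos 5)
Your proposal is correct and follows essentially the same route as the paper: both arguments hinge on Theorem~\ref{normality} to obtain the normal orbit family $\{S_\nu\}$ and then extend the attracting periodic branch from $H_\infty\cap U$ to a full neighbourhood of the accumulation parameter. You package this as an open-and-closed argument in the connected set $\widetilde{H}$, whereas the paper runs the equivalent contradiction by placing $\lambda_0\in\widetilde{H}\cap\partial H_\infty$ and concluding that $\lambda_0$ would have to lie in $\H(\Fli)$. The one genuine addition in your write-up is the explicit multiplier control: after extending the period-$k$ branch $\sigma$ by normality, you invoke the approximating families $\Fln$ (for which $U\Subset H_n$ forces $\vert\mu_n\vert<1$ throughout $U$), pass to the limit, and use the maximum-modulus principle to exclude $\vert\mu(\lambda_*)\vert=1$; the paper asserts that $a$ ``can then be analytically continued to an attracting point on the entire domain $B$'' without spelling out why the multiplier stays off the unit circle, so your argument supplies the justification left implicit there. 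One detail to state more carefully: to obtain $\mu_n\to\mu$ you need $a_n\to\sigma$ locally uniformly before applying the Weierstra{\ss} theorem, since $\mu_n=(f_{n,\lambda}^k)'(a_n(\lambda))$ depends on the points $a_n$ and not just on the maps; this follows by Montel applied to the family $\{a_n\}$ (whose images avoid $\{0,1\}$) together with the identification $a_n(\lambda_m)\to a(\lambda_m)$ at the parameters $\lambda_m\in H_\infty\cap U$, but it is worth making explicit.
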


\begin{proof}
We will prove the statement by contradiction, so assume that there exists some $\lambda_0\in\widetilde{H} \cap\partial H_{\infty}$.
With the same notations as in the previous proof, it follows that $S_{\nu}(B)\subset\C\backslash\lbrace 0,1\rbrace$ for all $\nu\in\N$, 
since $B^{'}:=B\cap H_{\infty}\neq\emptyset$ 
(and hence $S_{\nu}\vert_B$ cannot be constant $0$ or $1$).

For a limit function $S$ of $S_{\nu}$ we have 
that either $S\equiv c\in\widehat{\C}$, in which case $S\equiv\infty$, 
or $S$ is a non-constant function with $S(B)\subset\C\backslash\lbrace 0,1\rbrace$. 

The case $S\equiv\infty$ clearly cannot occur since 
this would mean that some singular value converges to $\infty$ for all
parameters in $B$ but the nonempty subset 
$B'$ of $B$ consists 
of nonescaping-hyperbolic parameters.

Let $S_{\nu}(B)\subset\C\backslash\lbrace 0,1\rbrace$. 
For all $\lambda\in B^{'}$ there is a holomorphic map 
$a:B^{'}\rightarrow\C$ such that $a(\lambda)$ is 
an attracting point of $f_{\lambda}$ which attracts $w(\lambda)$. 
Since $B\supset B^{'}$ is simply-connected, the point $a(\lambda)$ 
can then be analytically continued to an attracting point 
on the entire domain $B$. 
The image $a(B)$ is bounded, hence for every $\lambda\in B$ the singular value
$w(\lambda)$ is attracted by a finite attracting periodic cycle
of $f_{\lambda}$. Since $s=w(\lambda_0)$ was assumed to be 
an arbitrary singular value of $f_{\lambda_0}$, we can repeat this 
procedure for any of the singular values of $f_{\lambda_0}$. 
Recall that, by assumption, 
$\Fli\subset\Hol^{*}_b(\C)$, hence the singular sets are bounded.
This implies that  
$B\ni\lambda_0$ is contained in some 
nonescaping-hyperbolic component $H_{\infty}$ of $\Fli$, 
contradicting the assumption that 
$\lambda_0\in\widetilde{H}\cap\partial H_{\infty}$.
\end{proof}

Without the assumption of dynamical approximation, 
(which is part of 'dsa') 
we cannot expect that a kernel $\widetilde{H}$ is always a nonescaping-hyperbolic 
component of the family $\Fli$. It is easy to find suitable examples. 
One such example was given in \cite{krauskopf_kriete_3}: 
the authors approximated a holomorphic family of quadratic 
polynomials by families of polynomials of degree four, such that
a kernel of a sequence of nonescaping-hyperbolic components 
was a proper subset of some 
nonescaping-hyperbolic component of the limit family.

Here we give an example which respects our standing assumptions, 
showing that the case $\widetilde{H}\cap\H(\Fli)=\emptyset$ 
in Theorem \ref{thm_main} does indeed occur.

\begin{exm}
Let 
\begin{eqnarray}
P_{\lambda,n}(z)=z^3 -\left( \frac{\lambda -1}{\sqrt{\mu_n +\lambda -2}} +\sqrt{\mu_n +\lambda -2}\right)  z^2 +\lambda z,
\end{eqnarray}
where $\lambda\in\C\setminus [1,5]$, $\vert\mu_n\vert<1$ does not 
depend on $\lambda$ and $\mu_n\rightarrow 1$ as $n\rightarrow\infty$ 
(for instance we can choose $\mu_n=\frac{n}{n+1}$). 
Note that $P_{\lambda,n}(z)$ and 
$P_{\lambda}(z)=z^3 - 2\sqrt{\lambda-1} z^2 + \lambda z$ 
satisfy 'dsa' and 'hsa'. 
Every $P_{\lambda,n}$ has $0$ as a fixed point of multiplier $\lambda$. 
Furthermore, there is a second fixed point 
$a_n=\sqrt{\mu_n +\lambda -2}$ with multiplier $\mu_n$. 
Thus, if we choose $\lambda\in\D$ then every polynomial $P_{\lambda,n}$ 
is nonescaping-hyperbolic. 
Hence there is a kernel $\widetilde{H}$ of 
components of $\H(\Fln)$ which contains the unit disk $\D$. 
On the other hand, 
every $P_{\lambda}$ has a parabolic fixed point at 
$a=\sqrt{\lambda -1}$, hence $\H(\Fli)=\emptyset$. 
\end{exm}

Nevertheless, the behaviour of $f_{\lambda}$ is still 
stable in the sense of \emph{$J$-stability} for 
parameters belonging to a kernel. 
Here, $J$-stability is defined analogously to the case of rational maps 
or transcendental entire maps with finitely many singular values 
(see e.g. \cite{eremenko_lyubich_2}):

\begin{Definition}
\label{dfn_Jstable}
Let $\Fl=\lbrace f_{\lambda}:\C\rightarrow\C: \lambda\in M\rbrace$ 
be a holomorphic family of entire functions. 
A map $f_{\lambda_0}\in G$ is said to be \emph{$J$-stable} 
if $\J(f_{\lambda_0})$ moves holomorphically in a neighbourhood 
$\Lambda\subset M$ of $\lambda_0$, i.e., if there is a holomorphic motion 
\begin{eqnarray*}
\Phi:\Lambda\times\J(f_{\lambda_0})\rightarrow\C
\end{eqnarray*}
such that 
\begin{eqnarray*}
\Phi(\lambda,\J(f_{\lambda_0}))=\J(f_{\lambda})\text{ and } 
\Phi(\lambda, f_{\lambda_0}(z))=f_{\lambda}(\Phi(\lambda,z))
\end{eqnarray*}
for all $z\in\J(f_{\lambda_0})$ and all $\lambda\in\Lambda$.
\end{Definition}
$\Phi$ being a holomorphic motion means 
that $\Phi$ is injective in $z$ when $\lambda$ is fixed, holomorphic
in $\lambda$ for fixed $z$ and that
$\Phi_{\lambda_{0}}\equiv\id$. 
For more details see \cite[Section 8]{eremenko_lyubich_2}.

The map $\Phi_{\lambda}$ is a conjugacy between 
$f_{\lambda_0}$ and $f_{\lambda}$ on their Julia sets, hence it 
maps periodic points of $f_{\lambda_0}$ 
to periodic points of $f_{\lambda}$. 
Since periodic points form a dense subset of the Julia set, 
such a conjugacy is unique if it exists.

\begin{Theorem}
Let $\widetilde{H}$ be a kernel of a sequence of 
components of $\H(\Fln)$ and let $\lambda\in\widetilde{H}$. 
Then $f_{\lambda}\in\Fli$ is $J$-stable.
\end{Theorem}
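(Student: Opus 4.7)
The plan is to build, for a simply-connected neighbourhood $U\Subset\wt{H}$ of $\lambda_0$, a holomorphic motion $\Phi:U\times\J(f_{\lambda_0})\to\C$ that conjugates $f_{\lambda_0}$ and $f_\lambda$ on their Julia sets. The approach is classical: holomorphically continue every repelling periodic point of $f_{\lambda_0}$ across $U$, check that the continuations never collide, and invoke the $\lambda$-lemma to extend the resulting motion from the (dense) set of repelling periodic points to all of $\J(f_{\lambda_0})$.

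First I would choose a simply-connected $U\Subset\wt{H}$ containing $\lambda_0$. The proof of Theorem \ref{normality} already constructs, for two specific repelling periodic points of $f_{\lambda_0}$, holomorphic continuations across $U$, obtained as locally uniform limits of repelling periodic points $z_n(\lambda)$ of the approximating maps $f_{n,\lambda}$. The same mechanism applies to any repelling periodic point $z_0$ of $f_{\lambda_0}$: for $n$ large we have $\cl{U}\subset H_n$, and throughout the nonescaping-hyperbolic component $H_n$ every non-repelling cycle of $f_{n,\lambda}$ is attracting (Proposition \ref{prop_hyp}), so the multiplier of a repelling cycle cannot hit modulus $1$ anywhere in the connected set $U$. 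The Implicit Function Theorem therefore continues $z_n$ holomorphically across $U$, and passing to a locally uniform limit yields a holomorphic continuation $z:U\to\C$ of $z_0$ with $f_\lambda^p(z(\lambda))=z(\lambda)$ and $|(f_\lambda^p)'(z(\lambda))|\geq 1$.

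Next I would exclude collisions. If two such continuations $z^{(1)}(\lambda)\neq z^{(2)}(\lambda)$ coincided at some $\lambda^*\in U$, then Hurwitz's theorem applied to $z_n^{(1)}-z_n^{(2)}$ would force the corresponding approximants to coincide at parameters $\lambda_n^*\to\lambda^*$, contradicting the fact that $z_n^{(1)}(\lambda_n^*)$ and $z_n^{(2)}(\lambda_n^*)$ are distinct repelling periodic points of the nonescaping-hyperbolic map $f_{n,\lambda_n^*}\in H_n$. Thus $(\lambda,z)\mapsto z(\lambda)$ is an injective holomorphic motion on $U\times\Pi$, where $\Pi$ is the set of repelling periodic points of $f_{\lambda_0}$. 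Since $\Pi$ is dense in $\J(f_{\lambda_0})$, the $\lambda$-lemma extends this motion continuously to a holomorphic motion $\Phi$ on $U\times\J(f_{\lambda_0})$. The conjugation relation $\Phi(\lambda,f_{\lambda_0}(z))=f_\lambda(\Phi(\lambda,z))$ holds on $\Pi$ because both sides continue the same periodic orbit holomorphically and agree at $\lambda_0$, and propagates to $\J(f_{\lambda_0})$ by continuity; the image $\Phi_\lambda(\J(f_{\lambda_0}))$ is then closed, $f_\lambda$-invariant and contains a dense set of repelling periodic points of $f_\lambda$, whence it equals $\J(f_\lambda)$.

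The main obstacle is the collision-exclusion step. In the classical Ma\~{n}\'{e}--Sad--Sullivan picture the absence of collisions is derived from normality of the singular-value orbits together with a Fatou--Shishikura-type bound on non-repelling cycles; for maps in class $\B$ with \emph{infinite} singular sets such a bound is delicate, and invoking it directly is not obviously available. The cleanest way around this is the route taken above: within each $H_n$ nonescaping-hyperbolicity forbids indifferent cycles outright, and Hurwitz's theorem transports this prohibition to the limit family $\Fli$. That transport, together with the global continuation of repelling cycles across $U$ for each fixed $n$, is the one genuinely new ingredient required beyond what is already established in the proof of Theorem \ref{normality}.
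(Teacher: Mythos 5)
Your approach is genuinely different from the paper's. The paper argues by contraposition: if $f_{\lambda_0}$ fails to be $J$-stable, some repelling cycle loses its analytic continuation along a path in $\Lambda$, hence becomes indifferent, and by the Minimum Modulus Principle becomes attracting at some nearby $\tilde\lambda$; a singular value is attracted there, and the normality of the forward singular orbit supplied by Theorem \ref{normality} then forces this attracting cycle to persist all the way back to $\lambda_0$, contradicting that the cycle was repelling there. You instead build the holomorphic motion directly by continuing \emph{every} repelling cycle, using Hurwitz to transport the collision-avoidance from the nonescaping-hyperbolic components $H_n$ to the limit family. This nicely sidesteps the absence of a Fatou--Shishikura bound for entire maps with infinite singular sets; it is a clean direct argument, although it implicitly relies on the $\{0,1\}$-normalization from Theorem \ref{normality} both to make the families $\{z_n\}$ normal and, via Hurwitz on $\C\setminus\{0,1\}$, to ensure the limit continuation $z$ is $\C$-valued rather than meromorphic.

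The one genuine gap is the closing claim. Having built a motion $\Phi$ on $U\times\J(f_{\lambda_0})$, you conclude $\Phi_\lambda(\J(f_{\lambda_0}))=\J(f_\lambda)$ because the image ``contains a dense set of repelling periodic points of $f_\lambda$.'' But you have only shown density in the image itself, not in $\J(f_\lambda)$. What you actually have is $\Phi_\lambda(\Pi)\subseteq\mathrm{Rep}(f_\lambda)$, and hence, after compactifying at $\infty$ in order to see that $\Phi_\lambda(\J(f_{\lambda_0}))$ is closed, the inclusion $\Phi_\lambda(\J(f_{\lambda_0}))\subseteq\J(f_\lambda)$. Closedness, forward invariance and containing (some) repelling points is not enough for the reverse inclusion: nothing so far rules out that whole repelling cycles of $f_\lambda$ lie outside the image. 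You need the symmetric continuation: take an arbitrary repelling cycle of $f_\lambda$ for $\lambda\in U$ and continue it backward across $U$ to a repelling cycle of $f_{\lambda_0}$, with hyperbolicity of $H_n$ and Hurwitz again preventing the multiplier from touching the unit circle; uniqueness of analytic continuation then identifies it with $\Phi_\lambda$ of that point of $\Pi$, giving $\mathrm{Rep}(f_\lambda)\subseteq\Phi_\lambda(\Pi)$ and therefore equality of the closures. Once this bidirectional step is stated, $J$-stability follows and your proof is complete; without it, ``whence it equals $\J(f_\lambda)$'' is unsupported.
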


\begin{proof}
We will prove the contraposition, 
so let $\lambda_0$ be a parameter for which 
$f_{\lambda_0}$ is not J-stable 
and let $\Lambda$ be a simply-connected bounded neighbourhood of $\lambda_0$. 
Then there is some repelling periodic point, 
say of period $n$, of $f_{\lambda_0}$ 
which has no analytic continuation as a solution of the equation 
$f_{\lambda}^n(z)-z=0$. 
Otherwise, it would follow from the $\lambda$-lemma \cite{eremenko_lyubich_2} 
that the closure of all repelling points of $f_{\lambda_0}$, 
which equals the Julia set of $f_{\lambda_0}$, 
would move holomorphically, contradicting that $f_{\lambda_0}$ 
is not $J$-stable. 

Let $p(\lambda_0)$ be such a repelling periodic point and let 
$\gamma:\left[ 0,1\right] \rightarrow\Lambda$, $\gamma(0)=\lambda_0$, 
be a path along which $p(\lambda_0)$ cannot be continued analytically. 
By the Implicit Mapping Theorem, the point $p(\lambda)$ 
for $\lambda=\gamma(1)$ must be indifferent. 
By the Minimum Modulus Principle we can then find a nearby 
path $\tilde{\gamma}\subset\Lambda$ connecting $\lambda_0$ to some 
parameter $\tilde{\lambda}$ along which the considered point 
becomes attracting. Hence there is a singular value 
$s(\tilde{\lambda})$ of $f_{\tilde{\lambda}}$ 
converging to the attracting periodic point $p(\tilde{\lambda})$. 

Now let us assume that our assumption is wrong, 
meaning that $\lambda_0$ belongs to some kernel $\widetilde{H}$. 
We can assume w.l.o.g. that the neighbourhood $\Lambda$ was 
chosen sufficiently small such that $\Lambda\Subset\widetilde{H}$.
 Then Theorem \ref{normality} implies that there is a holomorphic 
parametrization $w$ of the singular value $s(\tilde{\lambda})$ such that $\lbrace f^n_{\lambda}(w(\lambda))\rbrace$ is a normal family on $\Lambda$. 
But then each $w(\lambda)$ converges to an attracting point 
$p(\lambda)$ of $f_{\lambda}$, in which case $p(\lambda)$ can 
be continued analytically to an attracting point of $f_{\lambda}$ 
on the whole of $\Lambda$, contradicting the fact that $p(\lambda_0)$ is repelling. 
\end{proof}

\section{Construction of examples}
\label{section_qc}

As we have seen in the previous section, 
a sequence of families to which our results apply has to satisfy
two primary conditions which we have formulated as the standing assumptions
'dsa' and 'hsa'.
Hence starting with an entire map $f\in\Hol^{*}_b(\C)$, we can split the problem 
into the following two:
 
\begin{itemize}
\item[$(i)$] Find a sequence $f_n$ of entire functions which approximates $f$ \emph{dynamically}.
\item[$(ii)$] Construct \emph{holomorphic} families $\lbrace f_{n,\lambda}\rbrace$ and $\lbrace f_{\lambda}\rbrace$ using the functions $f_n, f$. 
\end{itemize}
We will start with the second problem. 
It turns out that there is a natural way to find suitable holomorphic families 
for \emph{any} entire function. 

\subsection*{Holomorphic families inside quasiconformal equivalence classes}
Recall that $M$ is a complex manifold.
Let $\mu$ be a k-Beltrami coefficient of $\C$, i.e., 
$\mu:\C\rightarrow\C$ is a measurable function 
such that $\Vert\mu\Vert_{\infty}\leq k<1$ 
almost everywhere (a.e.) in $\C$. 
By the Integrability Theorem \cite[Theorem 4.4]{lehto}, 
there exists a k-quasiconformal homeomorphism 
$\Psi:\C\rightarrow\C$ whose complex dilatation 
equals $\mu$ a.e. in $\C$. 

Let $f\in\Hol^{*}_b(\C)$ be an entire map. 
Then the \emph{pull-back} $f^{*}\mu$ of $\mu$ by $f$ 
is given by
\begin{eqnarray*}
f^{*}\mu(z):=(\mu\circ f)\frac{\overline{f^{'}(z)}}{f^{'}(z)}.
\end{eqnarray*}
In particular, $\Vert\mu\Vert_{\infty}\leq k$ implies that 
$\Vert f^{*}\mu\Vert_{\infty}\leq k$.

By the Integrability Theorem  
there exists a quasiconformal homeomorphism $\Phi$ 
whose complex dilatation equals $f^{*}\mu$ a.e. in $\C$. 
A formal computation yields that the complex dilatation of the 
map $g=\Psi\circ f\circ\Phi^{-1}$ is $0$ a.e., which means that 
$g$ is a holomorphic map \cite[Theorem 1.1]{lehto}. 
We say that $g$ is \emph{quasiconformally equivalent} to $f$. 

Let $\Lambda\subset M$ be an open connected set and 
let $\lbrace\Psi_{\lambda}\rbrace$ be a family 
of quasiconformal homeomorphisms with uniformly bounded complex dilatations 
that depend holomorphically on $\lambda\in \Lambda$.
The parametrized version of the 
Integrability Theorem \cite[Chapter 4.7]{hubbard}
gives the following way to construct such a family,
starting from a Beltrami coefficient $\mu_0$:
 
Let $(\mu_{\lambda})_{\lambda\in\Lambda}$ be a holomorphic family 
of Beltrami coefficients with $\Vert \mu_{\lambda}\Vert_{\infty}\leq k<1$ 
that contains $\mu_0$, i.e.,
$\mu_0\equiv\mu_{\lambda_0}$ for some $\lambda_0\in\Lambda$.
(One way to embed $\mu_0$ in a holomorphic family
of Beltrami coefficients is as follows: Let
 $h:\Lambda\to D_r(0)$ be 
a holomorphic map, where $r$ is chosen such that 
$ (1+r)\cdot \Vert\mu_0\Vert_{\infty}<1$ and $h(\lambda_0)=0$. 
Then the functions $\mu_{\lambda}:= (1+h(\lambda))\cdot\mu_0$
form a holomorphic family of Beltrami coefficients.)
For every $\lambda$, let $\Psi_{\lambda}$ be a quasiconformal homeomorphism
with Beltrami coefficient $\mu_{\lambda}$, chosen such that
all $\Psi_{\lambda}$ have the same parametrization (e.g., all
$\Psi_{\lambda}$ fix $0$, $1$ and $\infty$). Then the Integrability
Theorem implies that $\lambda\mapsto \Psi_{\lambda}$ is also holomorphic.

As in the previous construction we obtain a family 
$\Fli=\lbrace f_{\lambda}=\Psi_{\lambda}\circ f
\circ\Phi^{-1}_{\lambda}\rbrace_{\lambda\in\Lambda}$ 
of entire maps. By computing the derivative of the equation 
$\Psi_{\lambda}\circ f=f_{\lambda}\circ\Phi_{\lambda}$ 
with respect to $\overline{\lambda}$, we get that the functions 
$f_{\lambda}$ also depend holomorphically on $\lambda$ 
(see e.g. the proof of Proposition 13 in \cite{buff_cheritat}). 
Since an entire function $f$ is a covering map when 
restricted to $\C\backslash f^{-1}(S(f))$, we obtain 
that $S(f_{\lambda})=\Psi_{\lambda}(S(f))$, 
hence the set of singular values moves 
holomorphically in the family $\Fli$ in a unique way. 
Furthermore, $S(f_{\lambda})$ is bounded for all $\lambda$
since the maximal dilatations of the 
quasiconformal maps $\Psi_{\lambda}$ are uniformly bounded.

Let $(f_n)$ be a sequence of entire 
functions which converges to $f$ dynamically. 
In the same manner as above, we define for each $n$ the holomorphic family 
$\Fln=\{ f_{n,\lambda}=\Psi_{\lambda}\circ f_n
\circ\Phi^{-1}_{n,\lambda}\}_{\lambda\in\Lambda}$ 
by starting with the same family $\lbrace\Psi_{\lambda}\rbrace$ 
of quasiconformal homeomorphisms. As before, the set of singular values  $S(f_{n,\lambda})=\Psi_{\lambda} (S(f_n))$ moves holomorphically. 
Hence it remains to show that the map 
\begin{eqnarray*}
F:M^{'}\rightarrow\Hol_b^{*}(\C),\; (n,\lambda)\mapsto f_{n,\lambda}
\end{eqnarray*}
is continuous. In other words, let $\lambda_0\in M$ 
and let $(n,\lambda)\rightarrow (\infty, \lambda_0)$. 
We have to show that in this case, 
\begin{eqnarray*}
\chi_{\luc}(f_{n,\lambda},f_{\lambda_0}) 
+ d_H(S(f_{n,\lambda}),S(f_{\lambda_0}))\rightarrow 0.
\end{eqnarray*}

Clearly, $d_H(S(f_{n,\lambda}),S(f_{\lambda_0}))
=d_H(\Psi_{\lambda}(S(f_n)),\Psi_{\lambda_0}(S(f)))\rightarrow 0$ 
as $(n,\lambda)\rightarrow (\infty, \lambda_0)$, 
since $\Psi_{\lambda}$ depends holomorphically 
(so in particular continuously) 
on $\lambda$ and the maps $f_n$ approximate $f$ dynamically.

To see that $\chi_{\luc}(f_{n,\lambda},f_{\lambda_0})\rightarrow 0$, 
we have to look at the sequence of pull-backs 
\begin{eqnarray*}
f_n^{*}\mu_{\lambda}=(\mu_{\lambda}\circ f_n)\frac{\overline{f_n^{'}(z)}}{f_n^{'}(z)},
\end{eqnarray*}
where $\mu_{\lambda}$ denotes the complex dilatation of $\Psi_{\lambda}$.
By assumption, there exists a constant $k<1$ such that 
$\Vert f_n^{*}\mu_{\lambda}\Vert_{\infty}<k<1$ for all $n$. 
Since also $f_n^{*}\mu_{\lambda}\rightarrow f^{*}\mu_{\lambda_0}$ 
a.e., it follows that 
the (uniquely normalized solutions) $\Phi_{n,\lambda}$ converge locally 
uniformly to $\Phi_{\lambda_0}$ \cite[Theorem 4.6]{lehto}, 
yielding the desired statement. 

\subsection*{Functions of sine type}
The above construction yields many examples of 
families where Theorems \ref{normality} and \ref{maintheorem} 
can be applied to, provided that we have a 
sequence of functions approximating $f$ dynamically. 
There are clearly various ways of approximating 
an entire function locally uniformly but it is a very 
strong requirement to keep control over the sets 
of singular values, since this set can be arbitrarily complicated 
(for instance, it can have nonempty interior). 

For certain (families of) transcendental entire 
functions that were and are of particular interest in 
holomorphic dynamics, appropriate approximations are known and were 
extensively studied. 
For instance, one can approximate the exponential map by the 
polynomials $P_n(z)=(1+z/n)^n$ or the function $f(z)=\sin(z)/z$,
which has infinitely many singular values,
by the sequence $T_n(z)/z$, where $T_n(z)$ denotes the 
Chebyshev polynomial of degree $n$. 

We want to introduce another set of transcendental entire 
functions for which a dynamical approximation exists, 
namely \emph{real sine-type maps with real zeros}.

\begin{Definition}
A function $f$ is said to be of 
\emph{sine type $\sigma$} if there are positive constants $c, C, \tau$ such that 
\begin{eqnarray*}
c\e^{\sigma\vert\Ima z\vert}\leq\vert f(z)\vert\leq C\e^{\sigma\vert\Ima z\vert},
\end{eqnarray*}
where the upper estimate holds everywhere in $\C$ and the lower
estimate holds at least outside the horizontal
strip $\vert\Ima z\vert\leq\tau$.
\end{Definition}

Let $f$ be a sine-type function and denote by $z_n$ the zeros of $f$. 
By \cite[Lecture 17]{levin} or (\cite[Theorem 3]{semmler}), the limit  
\begin{eqnarray*}
\lim_{R\rightarrow\infty}\prod_{\vert z_n\vert\leq R} \left( 1-\frac{z}{z_n}\right) 
\end{eqnarray*}
exists uniformly on compact subsets of $\C$, 
and it defines an entire function called the \emph{generating function of the sequence $(z_n)$} 
which equals $f$ up to $K\cdot z^m$ where $K$ is a constant and 
$m\geq 0$ is an integer \cite[Theorem 2]{semmler}. 
By definition, the zeros of $f$ are contained 
in a horizontal strip around the real axis and  
$f$ has exactly two tracts over $\infty$, 
each of which contains some upper and lower halfplane, respectively. 
It follows from the Ahlfors-Denjoy Theorem 
\cite[XI, \S 4, 269]{nevanlinna} that $f$ has at most 
two asymptotic values. 
The derivative of the generating function $\tilde{f}$ of $(z_n)$ is given by
$\tilde{f}^{'}=\tilde{f}\cdot\sum\frac{1}{z-z_n}$ and 
an elementary computation shows that $\tilde{f}$ and hence $f$ has no
critical points outside a sufficiently wide horizontal strip.
So the set of 
critical values of $f$ is bounded, 
implying that every $f$
belongs to the Eremenko-Lyubich class $\B$.

It is now easy to show the following.
\begin{Proposition}
Let $f$ be a real sine-type function for which all zeros are real.
Then there exists a sequence $p_n$ of polynomials such that
$\chi_{\dyn}(p_n,f)\to 0$ when $n\to\infty$. 
\end{Proposition}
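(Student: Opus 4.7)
My plan is to take as candidates the partial products of the generating-function representation of $f$: set
\[
p_n(z):=Kz^m\prod_{|z_k|\le n}(1-z/z_k),
\]
where $(z_k)\subset\R$ are the zeros of $f$ and $K,m$ are the constants that identify $f$ with its generating function. The locally uniform convergence of these truncated products, stated in the paragraph preceding the proposition, gives $\chi_{\luc}(p_n,f)\to 0$ immediately, so the task reduces to showing $d_H(S(p_n),S(f))\to 0$ in the spherical metric. Since each $p_n$ is a polynomial, $S(p_n)$ is simply the set of its critical values.

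Two structural features drive the rest of the argument. First, every zero of $p_n$ lies on $\R$, so by Gauss--Lucas every critical point of $p_n$ is real and hence every critical value of $p_n$ is real. Second, $S(f)=\overline{C(f)\cup A(f)}$ is bounded: the discussion above placed $C(f)$ in a compact set and the Ahlfors--Denjoy theorem gives $|A(f)|\le 2$. Lower semi-continuity of the singular set under locally uniform convergence, referenced earlier via \cite{kisaka}, already yields one inclusion: every point of $S(f)$ is a limit of critical values of $p_n$. For the opposite inclusion, I would use the identity
\[
\frac{p_n'(z)}{p_n(z)}=\frac{f'(z)}{f(z)}-\sum_{|z_k|>n}\frac{1}{z-z_k},
\]
valid away from the zeros of $f$, after checking that the tail sum makes sense via the symmetric summation built into the generating-function convergence. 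If $(c_n)$ is a sequence of critical points of $p_n$ remaining in a compact subset $K\Subset\C$, the tail converges locally uniformly to zero on $K\setminus f^{-1}(0)$. A standard compactness argument---treating separately critical points that approach zeros of $f$, where $p_n'$ converges to the non-vanishing $f'$---then shows that every subsequential limit $c$ of $(c_n)$ satisfies $f'(c)=0$, so $p_n(c_n)\to f(c)\in C(f)\subset S(f)$.

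The main obstacle is the case $|c_n|\to\infty$. Since $c_n\in\R$ and $S(f)$ is bounded, what must be excluded is an accumulation of the real sequence $(p_n(c_n))$ at a point of $\R\setminus S(f)$ or, in the spherical sense, at $\infty$. This is nontrivial: the truncation removes precisely the far-away zeros responsible for the delicate cancellation that produces the sine-type bound $|f(x)|\le C$ on $\R$, and between the outermost used zeros $z_k,z_{k+1}$ with $k$ close to $n$, the polynomial $p_n$ can a priori oscillate with amplitude much larger than $C$. Controlling this edge behaviour is the technical heart of the proof: I would exploit the sine-type upper bound together with the regular spacing of $(z_k)$ to quantify the growth of the excluded tail factor $\prod_{|z_j|>n}(1-z/z_j)$ on intervals near the real axis, and---if the naive partial products should prove insufficient---replace $p_n$ by a mildly tempered modification obtained by multiplying by a low-degree polynomial chosen to offset the uncancelled growth in the edge region, in such a way that $\chi_{\luc}(p_n,f)\to 0$ is preserved. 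This is the step that I expect to require the most care.
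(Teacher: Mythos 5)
Your proposal tracks the paper's own proof very closely in its overall architecture: both take the truncated canonical products as candidates, both use Laguerre (respectively Gauss--Lucas/Rolle for the truncations) to keep all critical points and critical values real, and both invoke Kisaka's lower semi-continuity result for the inclusion $S(f)\subset U_\varepsilon(S(p_n))$. Where you diverge is in candour: you flag the case $|c_n|\to\infty$ of escaping critical points as the genuine difficulty and leave it open, whereas the paper dispatches it with the one-line claim that ``the generating polynomials cannot have free critical values'' by ``basic calculus arguments''.

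Your worry is not hypothetical; it is already fatal for the prototype $f(z)=\sin(\pi z)$. Write $P_n(z)=\pi z\prod_{k=1}^n(1-z^2/k^2)$ for the truncated product. For $x\in(n-1,n)$ one has $P_n(x)=\sin(\pi x)\big/\prod_{k>n}(1-x^2/k^2)$, and a direct estimate of the tail product (or simply evaluating: $|P_2|\approx 2.8$, $|P_3|\approx 7$, $|P_4|\approx 22$ at the outermost extrema) shows that the outermost critical value grows roughly like $4^n$. Since $S(\sin\pi z)=\{\pm1\}$ and $S(P_n)$ is unbounded in $n$, the spherical Hausdorff distance $d_H(S(P_n),S(\sin\pi z))$ does not tend to zero, and the naive partial products do not converge in $\chi_{\dyn}$. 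So some intervention of the kind you gesture at --- a tempering factor, or an outright different construction (e.g. rescaled Chebyshev polynomials in the spirit of the $\sin(z)/z$ example given earlier in the paper) --- is genuinely unavoidable, and this is precisely the step the quoted ``basic calculus arguments'' would have to justify. As written, your proposal is no further along at this point than the paper's own proof, but it is incomplete: the ``technical heart'' you correctly identify requires a concrete construction and estimate, and a sketch of the form ``exploit the sine-type bounds, or else multiply by a low-degree corrector'' does not yet constitute a proof that such a corrector exists while preserving locally uniform convergence and real critical values.
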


\begin{proof}
By a theorem of Laguerre \cite[Chapter 8.51]{titchmarsh}, 
all zeros of $f^{'}(z)$ are real and are 
separated from each other by the zeros of $f$. 
By basic calculus arguments, the generating polynomials 
cannot have ``free'' critical values, 
and by \cite[Theorem 2]{kisaka} each singular value of 
$f$ is approximated by a sequence of singular values of 
the generating polynomials. 
Hence the sets of singular values converge in the Hausdorff metric. 
\end{proof}

\begin{Remark}
Standard approximation methods in function theory  
do not relate to the sets of singular values in a way that would enable us to construct 
a sequence of entire maps that dynamically approximate a given function $f$ in a 
nontrivial way.  
This leaves open an interesting function-theoretic question.
\end{Remark}

\end{document}